\newtheorem{theorem}{Theorem}[section]
\newtheorem{lemma}[theorem]{Lemma}
\newtheorem{proposition}[theorem]{Proposition}
\theoremstyle{definition}
\newtheorem{definition}[theorem]{Definition}
\newtheorem{remark}[theorem]{Remark}
\newtheorem{corollary}[theorem]{Corollary}
\begin{document}
\begin{frontmatter}
\title{On the exponent of convergence of Engel series}

\author{Lei Shang}
\ead{auleishang@gmail.com}
\address{School of Mathematics, South China University of Technology, Guangzhou 510640, P.R. China}

\author{Min Wu}
\ead{wumin@scut.edu.cn}
\address{School of Mathematics, South China University of Technology, Guangzhou 510640, P.R. China}

\begin{abstract}\par
For $x\in (0,1)$, let $\langle d_1(x),d_2(x),d_3(x),\cdots \rangle$ be the Engel series expansion of $x$. Denote by $\lambda(x)$ the exponent of convergence of the sequence $\{d_n(x)\}$, namely
\begin{equation*}
\lambda(x)= \inf\left\{s \geq 0: \sum_{n \geq 1} d^{-s}_n(x)<\infty\right\}.
\end{equation*}
It follows from Erd\H{o}s, R\'{e}nyi and Sz\"{u}sz (1958) that $\lambda(x) =0$ for Lebesgue almost all $x\in (0,1)$. This paper is concerned with the topological and fractal properties of the level set $\{x\in (0,1): \lambda(x) =\alpha\}$ for $\alpha \in [0,\infty]$.
For the topological properties, it is proved that each level set is uncountable and dense in $(0,1)$. Furthermore, the level set is of the first Baire category for $\alpha\in [0,\infty)$ but residual for $\alpha =\infty$.
For the fractal properties, we prove that the Hausdorff dimension of the level set is as follows:
\[
\dim_{\rm H} \big\{x \in (0,1): \lambda(x) =\alpha\big\}=\dim_{\rm H} \big\{x \in (0,1): \lambda(x) \geq\alpha\big\}=
\left\{
  \begin{array}{ll}
    1-\alpha, & \hbox{$0\leq \alpha\leq1$;} \\
    0, & \hbox{$1<\alpha \leq \infty$.}
  \end{array}
\right.
\]
\end{abstract}

\begin{keyword}
Engel series\sep exponent of convergence\sep Baire classification of sets\sep Hausdorff dimension
\MSC[2010] 11K55\sep 26A21\sep 28A80\sep
\end{keyword}

\end{frontmatter}

\section{Introduction}
Let $\{a_n\}$ be a finite or infinite sequence of positive integers. The \emph{exponent of convergence} of $\{a_n\}$, denoted by $\lambda:=\lambda(\{a_n\})$,
is defined as the infimum of $s \geq 0$ for which the series $ \sum_{n \geq 1} a^{-s}_n$ converges. That is,
\[
\lambda= \inf\left\{s \geq 0: \sum_{n \geq 1} a^{-s}_n<\infty\right\}.
\]
Note that the exponent of convergence of a finite sequence is equal to zero. For an infinite sequence $\{a_n\}$, if it is non-decreasing, then $\lambda$ can be described in terms of the growth rate of $a_n$:
\begin{equation}\label{lam}
\lambda = \limsup_{n \to \infty} \frac{\log n}{\log a_n},
\end{equation}
see \cite[p.\,26]{PS}.
The exponent of convergence has been extensively studied in several fields of mathematics, such as complex analysis and fractal geometry, see \cite{Bea, Fal90, FS, KS82, PS, Salat}.
In particular, \u{S}al\'{a}t \cite{Salat} investigated some topological and fractal properties for the exponent of convergence of a certain sequence associated with the dyadic representation of real numbers.

In this paper, we are concerned with the exponent of convergence of the digits in Engel series.
Let $T:[0,1)\to [0,1)$ be the \emph{Engel series map} defined as $T(0):=0$ and
\[
T(x) = x\left\lceil\frac{1}{x}\right\rceil -1,\ \ \ \forall x \in (0,1),
\]
where $\lceil y\rceil$ denotes the least integer not less than $y$. For $x\in (0,1)$, put $d_1(x) = \lceil 1/x\rceil$ and $d_{n+1}(x) = d_1(T^n(x))$ for $n \geq 1$.
Then $x$ admits a series of the form
\begin{equation}\label{EE}
x= \frac{1}{d_1(x)} + \frac{1}{d_1(x)d_2(x)}+\frac{1}{d_1(x)d_2(x)d_3(x)}+\cdots,
\end{equation}
which is called the \emph{Engel series expansion} of $x$. Here $d_1(x), d_2(x),d_3(x), \cdots$ are positive integers and called the \emph{digits} of the Engel series expansion of $x$.
If there exists $k\in \mathbb{N}$ such that $T^k(x)=0$, then we say that the Engel series expansion of $x$ is \emph{finite} and write the right-hand side of (\ref{EE}) as $\langle d_1(x),d_2(x),\cdots,d_k(x)\rangle$;
otherwise the Engel series expansion of $x$ is said to be \emph{infinite} and the right-hand side of (\ref{EE}) will be denoted by $\langle d_1(x),d_2(x),\cdots,d_k(x),\cdots\rangle$.
It was shown in \cite[p.\,7]{ERS58} that $x$ is irrational if and only if its Engel series expansion is infinite, and $2 \leq d_1(x) \leq \cdots \leq d_{n-1}(x) \leq d_n(x)\leq \cdots$ with
$d_n(x) \to \infty$ as $n \to \infty$ for any irrational number $x \in (0,1)$. For example, the Euler number $e$ has a simple Engel series expansion $e-2=\langle 2,3,\cdots,n,\cdots\rangle$ which implies that $e$ is irrational and its digits tend to infinity with a linear growth speed. For the asymptotic behavior of $d_n$, a classical result of Erd\H{o}s, R\'{e}nyi and Sz\"{u}sz \cite[Theorem 3]{ERS58} says that for Lebesgue almost all $x\in (0,1)$,
\begin{equation}\label{slln}
\lim_{n \to \infty} \frac{\log d_n(x)}{n} = 1.
\end{equation}
This means that for Lebesgue almost all $x\in (0,1)$, the growth rate of $d_n(x)$ is exponential.
See Galambos \cite{lesGal76} for more results of Engel series.

Let $\lambda(x)$ be the exponent of convergence of the sequence of digits in the Engel series expansion of $x$. Namely
\begin{equation}\label{cal}
\lambda(x):= \inf\left\{s \geq 0: \sum_{n \geq 1} d^{-s}_n(x)<\infty\right\}.
\end{equation}
It is easy to see that $\lambda(x)$ takes values in $[0,\infty]$, $\lambda(x) =0$ for all rational numbers $x\in(0,1)$, and $\lambda(e-2)=1$.
In fact, we will see below that there are uncountably many irrational numbers such that their exponents of convergence can be any prescribed real number (see Theorem \ref{Dense}).
By \eqref{lam} and (\ref{slln}), $\lambda(x) =0$ for Lebesgue almost all $x\in (0,1)$. In other words, $\{x\in (0,1): \lambda(x) =0\}$ has full Lebesgue measure but the level set $\{x\in (0,1): \lambda(x) =\alpha\}$ is of Lebesgue measure zero for $\alpha \in (0,\infty]$. We would like to study the fine structure and size of these level sets.
From a topological point of view, we will prove that for $\alpha \in [0,\infty)$, the set $\{x \in (0,1): \lambda(x) \leq\alpha\}$ is of the first Baire category but the set $\{x \in (0,1): \lambda(x) =\infty\}$ is residual,
see Theorem \ref{RF}. Hence for $\alpha \in [0,\infty)$, the level set $\{x\in (0,1): \lambda(x) =\alpha\}$ is of the first Baire category and the set $\{x \in (0,1): \lambda(x) \geq\alpha\}$ is residual.
For the fractal properties, $\{x \in (0,1): \lambda(x) \leq\alpha\}$ has full Hausdorff dimension since $\lambda(x) =0$ for Lebesgue almost all $x\in (0,1)$.
However, we will show the following multifractal analysis result in Theorem \ref{lambda}:
\begin{equation*}
\dim_{\rm H} \big\{x \in (0,1): \lambda(x) =\alpha\big\}=\dim_{\rm H} \big\{x \in (0,1): \lambda(x) \geq\alpha\big\}=
\left\{
  \begin{array}{ll}
    1-\alpha, & \hbox{$0\leq \alpha\leq1$;} \\
    0, & \hbox{$1<\alpha\leq \infty$.}
  \end{array}
\right.
\end{equation*}
Here and in the sequel $\dim_{\rm H}$ denotes the Hausdorff dimension.
By \eqref{lam} and \eqref{cal}, $\lambda(x)$ can be written as the limsup of $\frac{\log n}{\log d_n(x)}$ for all irrational numbers $x\in (0,1)$.
Hence the multifractal analysis of $\lambda(x)$ is closely related to the Hausdorff dimension of certain sets associated with the growth rate of the digits.
For more information on this topic, we refer the reader to \cite{FWnon, Liu, LW03, SW, SW2} and references therein.

The paper is organized as follows. Section 2 is devoted to several definitions and useful lemmas of the Engel series.
In Section 3, we prove Theorems \ref{Dense} and \ref{RF}. The proof of Theorem \ref{lambda} is given in Section 4.

%

\section{Preliminaries}
We begin with several definitions and basic properties of Engel series.

\begin{definition}
A finite sequence $(\sigma_1, \sigma_2, \cdots, \sigma_n) \in \mathbb{N}^n$ is said to be \emph{admissible} for Engel series if there exists $x \in (0,1)$ such that $d_1(x) = \sigma_1, d_2(x) = \sigma_2, \cdots, d_n(x) = \sigma_n$. An infinite sequence $(\sigma_1, \sigma_2, \cdots, \sigma_k, \cdots) \in \mathbb{N}^\mathbb{N}$ is said to be \emph{admissible} for Engel series if there exists $x \in (0,1)$ such that $d_1(x) = \sigma_1, d_2(x) = \sigma_2, \cdots, d_k(x) = \sigma_k$ for all $k \geq 1$.

\end{definition}

Denote by $\Sigma_n$ the collection of all admissible sequences with length $n$ and by $\Sigma$ that of all infinite admissible sequences.
The following result gives a characterisation of admissible sequences.

\begin{proposition}[\cite{lesGal76}]\label{AD}
A finite sequence $(\sigma_1, \sigma_2, \cdots, \sigma_n) \in \Sigma_n$ if and only if $2 \leq \sigma_1 \leq \sigma_2 \leq \cdots \leq \sigma_n$.
An infinite sequence $(\sigma_1, \sigma_2, \cdots, \sigma_k, \cdots) \in \Sigma$ if and only if
\[
\sigma_1 \geq 2, \ \ \ \ \ \ \ \sigma_{k+1} \geq \sigma_k, \ \forall\ k \geq 1 \ \ \ \ \ \text{and} \ \ \ \ \ \lim_{k \to \infty}\sigma_k = \infty.
\]
\end{proposition}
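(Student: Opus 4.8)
The plan is to prove both implications by working directly with the Engel map $T$ and the self-similarity relation $T(x) = d_1(x)\,x - 1$, treating the finite and infinite cases in parallel since the same mechanism drives both.

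For the necessity (``only if'') direction, I would first record that $x \in (0,1)$ forces $d_1(x) = \lceil 1/x \rceil \geq 2$. The monotonicity $d_{n+1}(x) \geq d_n(x)$ follows from a one-step computation: writing $m = d_1(y) = \lceil 1/y\rceil$ for $y = T^{n-1}(x)$, the bounds $1/m \leq y < 1/(m-1)$ give $0 \leq T(y) = my-1 < 1/(m-1)$, whence $\lceil 1/T(y)\rceil \geq m$; applying this along the orbit yields the claim. For an infinite expansion I must additionally rule out that the non-decreasing digits stay bounded. If they were eventually equal to a constant $d$, then $z := T^{N-1}(x)$ would satisfy $T^j(z) \in [1/d, 1/(d-1))$ with $T$ acting as $w \mapsto dw - 1$ there; writing $z = 1/(d-1) - \varepsilon$ with $\varepsilon > 0$ gives $T^j(z) = 1/(d-1) - d^j\varepsilon$, which drops below $0$ for large $j$, contradicting $T^j(z) \geq 0$. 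Hence $\sigma_k \to \infty$.

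For the sufficiency (``if'') direction, given a non-decreasing sequence with $\sigma_1 \geq 2$ (and $\sigma_k \to \infty$ in the infinite case), I would set $x = \sum_{k\geq1} (\sigma_1\sigma_2\cdots\sigma_k)^{-1}$ and show the algorithm reproduces exactly these digits. The crux is a single extraction step: $1/\sigma_1 \leq x < 1/(\sigma_1-1)$, hence $d_1(x) = \sigma_1$. The lower bound is the first term; the upper bound follows by factoring out $1/\sigma_1$ and dominating the bracketed tail $1 + \sigma_2^{-1} + (\sigma_2\sigma_3)^{-1} + \cdots$ by the geometric series $\sum_{j\geq0}\sigma_1^{-j} = \sigma_1/(\sigma_1-1)$, using $\sigma_k \geq \sigma_1$, the inequality being \emph{strict} because $\sigma_k > \sigma_1$ eventually (infinite case) or the sum has only finitely many terms (finite case). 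A direct computation then gives $T(x) = \sum_{k\geq2}(\sigma_2\cdots\sigma_k)^{-1}$, which is precisely the value attached to the shifted sequence $(\sigma_2, \sigma_3, \ldots)$, itself admissible; iterating (inducting on the shift) recovers $d_k(x) = \sigma_k$ for every $k$, with the finite sum reaching $T^n(x) = 0$ in the finite case. The same bound incidentally places $x \in (0,1)$.

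The main obstacle is this extraction step: one must verify $d_1(x) = \sigma_1$ from the explicit series, and the only delicate point is squeezing the tail below $\sigma_1/(\sigma_1-1)$ with a strict inequality. Everything else, namely the necessity computation and the inductive propagation through the shift, is then routine bookkeeping.
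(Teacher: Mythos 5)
The paper offers no proof of this proposition at all: it is quoted verbatim from Galambos \cite{lesGal76} as a known characterisation, so there is nothing in the text to compare your argument against line by line. Your proposal is a correct, self-contained proof. The necessity half is sound: $d_1(x)=\lceil 1/x\rceil\geq 2$ is immediate, the one-step bound $0\leq T(y)=my-1<1/(m-1)$ for $m=\lceil 1/y\rceil$ correctly forces $\lceil 1/T(y)\rceil\geq m$, and the eventual-constancy argument (iterating $w\mapsto dw-1$ on $[1/d,1/(d-1))$ drives $1/(d-1)-d^{j}\varepsilon$ negative) is exactly the right way to rule out bounded digits in the infinite case. The sufficiency half correctly identifies the only delicate point, namely the strict inequality $x<1/(\sigma_1-1)$: your domination of the tail $1+\sigma_2^{-1}+(\sigma_2\sigma_3)^{-1}+\cdots$ by the geometric series $\sigma_1/(\sigma_1-1)$, with strictness supplied either by $\sigma_k\to\infty$ (infinite case) or by the finiteness of the sum (finite case), is valid, and the identity $T(x)=\sum_{k\geq 2}(\sigma_2\cdots\sigma_k)^{-1}$ reduces the rest to induction on the shift. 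This is the standard argument one would find in \cite{lesGal76}, and I see no gaps.
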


\begin{definition}
Let $(\sigma_1, \sigma_2, \cdots, \sigma_n) \in \Sigma_n$. We call
\begin{equation*}
I_n(\sigma_1, \sigma_2, \cdots, \sigma_n) := \big\{x \in (0,1): d_1(x)=\sigma_1,d_2(x)=\sigma_2,\cdots,d_n(x)=\sigma_n\big\}
\end{equation*}
the cylinder of order $n$ of the Engel series.
\end{definition}

Now we give some basic facts on the structure and length of cylinders of the Engel series. We use $|I|$ to denote the diameter of a subset $I$ of $(0,1)$.

\begin{proposition}[{\cite[p. 84]{lesGal76}}]\label{cylinder}
Let $(\sigma_1, \sigma_2, \cdots, \sigma_n) \in \Sigma_n$. Then the cylinder $I_n(\sigma_1, \sigma_2, \cdots, \sigma_n)$ is an interval with the left endpoint
\[
\frac{1}{\sigma_1}+\cdots+\frac{1}{\sigma_1\sigma_2\cdots \sigma_{n-1}}+ \frac{1}{\sigma_1\sigma_2\cdots \sigma_{n-1}\sigma_n}
\]
and the right endpoint
\[
\frac{1}{\sigma_1}+\cdots+ \frac{1}{\sigma_1\sigma_2\cdots \sigma_{n-1}}+ \frac{1}{\sigma_1\sigma_2\cdots \sigma_{n-1}(\sigma_n-1)}.
\]
Moreover,
\begin{equation*}\label{cylinder length}
\left|I_n(\sigma_1, \sigma_2, \cdots, \sigma_n)\right| = \frac{1}{\sigma_1\sigma_2\cdots \sigma_{n-1}\sigma_n(\sigma_n-1)}.
\end{equation*}
\end{proposition}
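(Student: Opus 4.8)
The plan is to reduce everything to the fact that the Engel map $T$ acts affinely on each cylinder. First I would record the one-step picture. Since $d_1(x)=\lceil 1/x\rceil$, the equation $d_1(x)=\sigma_1$ is equivalent to $\sigma_1-1<1/x\le\sigma_1$, that is $x\in[1/\sigma_1,1/(\sigma_1-1))$; moreover on this interval $T(x)=\sigma_1 x-1$ is an increasing affine map carrying it bijectively onto $[0,1/(\sigma_1-1))$. This gives the statement for $n=1$ and, just as importantly, identifies the range of $T$ on a first-order cylinder.

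Next I would iterate the identity $x=(1+T(x))/\sigma_1$, valid on each cylinder, to establish by induction on $n$ the affine representation
\[
x=\sum_{j=1}^{n-1}\frac{1}{\sigma_1\sigma_2\cdots\sigma_j}+\frac{T^{n-1}(x)}{\sigma_1\sigma_2\cdots\sigma_{n-1}},\qquad x\in I_{n-1}(\sigma_1,\cdots,\sigma_{n-1}).
\]
The inductive step merely substitutes $T^{n-2}(x)=(1+T^{n-1}(x))/\sigma_{n-1}$ into the representation of order $n-1$. In particular $T^{n-1}$ restricted to $I_{n-1}(\sigma_1,\cdots,\sigma_{n-1})$ is increasing and affine, and the same induction shows it maps this cylinder bijectively onto $[0,1/(\sigma_{n-1}-1))$.

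With this in hand, the defining condition for $I_n$ is $d_n(x)=\sigma_n$, i.e.\ $d_1(T^{n-1}(x))=\sigma_n$, which by the base case means $T^{n-1}(x)\in[1/\sigma_n,1/(\sigma_n-1))$. Substituting the two endpoints $T^{n-1}(x)=1/\sigma_n$ and $T^{n-1}(x)=1/(\sigma_n-1)$ into the (increasing) affine representation yields exactly the claimed left and right endpoints, and subtracting them gives
\[
\left|I_n(\sigma_1,\cdots,\sigma_n)\right|=\frac{1}{\sigma_1\cdots\sigma_{n-1}}\left(\frac{1}{\sigma_n-1}-\frac{1}{\sigma_n}\right)=\frac{1}{\sigma_1\sigma_2\cdots\sigma_{n-1}\sigma_n(\sigma_n-1)}.
\]

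I expect the main obstacle to be verifying that $I_n$ is genuinely a single interval rather than a union of pieces, and this is precisely where the admissibility hypothesis $\sigma_{n-1}\le\sigma_n$ from Proposition \ref{AD} enters. Because $T^{n-1}$ covers only $[0,1/(\sigma_{n-1}-1))$ rather than all of $(0,1)$, one must check that the target interval $[1/\sigma_n,1/(\sigma_n-1))$ lies inside this range; the inequality $1/(\sigma_n-1)\le1/(\sigma_{n-1}-1)$, guaranteed by $\sigma_n\ge\sigma_{n-1}$, ensures that the preimage is a single nonempty subinterval of $I_{n-1}$. Once the interval structure and the exact range of $T^{n-1}$ are pinned down, the endpoint identification and the length computation are routine.
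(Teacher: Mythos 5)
Your argument is correct. The paper does not prove this proposition at all---it is quoted from Galambos \cite[p.~84]{lesGal76} as a known fact---so there is no in-paper proof to compare against. Your derivation (iterating the affine identity $x=(1+T(x))/d_1(x)$ to get the representation of $x$ in terms of $T^{n-1}(x)$, identifying the range of $T^{n-1}$ on $I_{n-1}$ as $[0,1/(\sigma_{n-1}-1))$, and using $\sigma_n\ge\sigma_{n-1}$ to see that the preimage of $[1/\sigma_n,1/(\sigma_n-1))$ is a single nonempty subinterval) is the standard one and fills the gap correctly, including the only genuinely delicate point, namely why $I_n$ is a single interval.
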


The following result, due to \cite[Theorem 4.4 \& Lemma 4.5]{FWnon}, is often used to give the upper bound for the Hausdorff dimension of certain sets arising in Engel series.

\begin{lemma}[\cite{FWnon}]\label{2}
Let $\varphi:\mathbb{N} \rightarrow\mathbb{R}^{+}$ be a function and $N\geq 1$ be an integer. Then
\[
\dim_{\rm H}\big\{x \in (0,1): d_n(x) \geq \varphi(n), \forall n \geq N\big\} = \frac{1}{B},
\]
where $B$ is given by
\[
\log B := \limsup\limits_{n \to \infty} \frac{\log\log\varphi(n)}{n}.
\]
\end{lemma}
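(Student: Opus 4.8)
The plan is to prove the two inequalities $\dim_{\rm H} E \le 1/B$ and $\dim_{\rm H} E \ge 1/B$ separately, where $E = \{x\in(0,1): d_n(x)\ge \varphi(n)\ \forall n\ge N\}$. First I would normalise the target function: replacing $\varphi(n)$ by the rounded-up running maximum $\max\{2,\lceil\varphi(N)\rceil,\ldots,\lceil\varphi(n)\rceil\}$ changes neither the set $E$ — since the digits are integers and non-decreasing by Proposition \ref{AD} — nor the value of $B$, so I may assume $\varphi$ is integer-valued, non-decreasing and $\ge 2$. By Proposition \ref{cylinder} every order-$n$ cylinder has length $|I_n(\sigma_1,\ldots,\sigma_n)| = \frac{1}{\sigma_1\cdots\sigma_{n-1}\sigma_n(\sigma_n-1)}$, and the decisive geometric feature of Engel series is that the order-$n$ cylinders sharing a prefix $(\sigma_1,\ldots,\sigma_{n-1})$ \emph{cluster}: their union over the last digit $\sigma_n\ge M$ is a single interval of length $\frac{1}{\sigma_1\cdots\sigma_{n-1}(M-1)}$, because $\sum_{m\ge M}\frac{1}{m(m-1)}=\frac{1}{M-1}$. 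This clustering is what forces the Hausdorff dimension to be much smaller than the box dimension, and it drives both bounds.

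For the lower bound I would build a Cantor subset $F\subseteq E$ by allowing, at level $n$, the digit $\sigma_n$ to range over $\asymp\varphi(n)$ consecutive integers starting at $\varphi(n)$; monotonicity is automatic, since $B>1$ forces $\varphi$ to grow doubly exponentially so that $\varphi(n+1)$ dwarfs the previous digit. I equip $F$ with the natural mass distribution $\mu$ spreading mass uniformly over the admissible digits at each level, so $\mu(I_n)\asymp\prod_{j\le n}\varphi(j)^{-1}$. The heart of the argument is the mass distribution principle applied with care at \emph{all} scales $r$: by the clustering above, the binding scale is $r_n\asymp \frac{1}{\sigma_1\cdots\sigma_{n-1}\,\varphi(n)}$, at which a ball captures essentially one whole cluster, $\mu(B(x,r_n))\asymp\mu(I_{n-1})$, giving local exponent
\[
\frac{\log\mu(I_{n-1})}{\log r_n}\asymp\frac{\sum_{j<n}\log\varphi(j)}{\sum_{j\le n}\log\varphi(j)}.
\]
A short computation shows this ratio tends to $1/B$ along the subsequence realising $\limsup_n\frac1n\log\log\varphi(n)=\log B$, whence $\mu(B(x,r))\le C_\varepsilon\, r^{1/B-\varepsilon}$ for every $\varepsilon>0$ and $\dim_{\rm H}E\ge \dim_{\rm H}F\ge 1/B$.

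For the upper bound I would cover $E$ by the cluster intervals $J(\sigma_1,\ldots,\sigma_{n-1})=\bigcup_{\sigma_n\ge\max(\sigma_{n-1},\varphi(n))}I_n(\sigma_1,\ldots,\sigma_n)$ rather than by individual cylinders. The naive cover by order-$n$ cylinders is too lossy: it sees only the box-counting value $\frac{B}{2B-1}$, which is strictly larger than $1/B$ (indeed the last-digit sum already diverges for $s\le 1/2$). Instead one organises the cover by the levels at which the increasing digits first cross the thresholds $\varphi(j)$, so that a prefix which has already grown large is absorbed into a cluster at that earlier level; the monotonicity constraint $\sigma_1\le\cdots\le\sigma_{n-1}$ then keeps the prefix sum under control while each $|J|^s$ carries the small factor $\varphi(n)^{-s}$. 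Evaluating the resulting series along the extremal subsequence shows $\sum |J|^s\to 0$ for every $s>1/B$, so $\mathcal{H}^s(E)=0$ and $\dim_{\rm H}E\le 1/B$.

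The main obstacle is precisely this upper bound: one must design a covering that detects the clustering, since otherwise the estimate degrades to the box dimension $\frac{B}{2B-1}>\frac1B$. Matching the two bounds then reduces to the elementary asymptotic that $\frac1n\log\log\varphi(n)\to\log B$ controls $\frac{\sum_{j<n}\log\varphi(j)}{\sum_{j\le n}\log\varphi(j)}\to 1/B$ along the extremal subsequence. A secondary technical point, forced by the fact that $B$ is defined through a $\limsup$, is that for irregular $\varphi$ one must anchor both the Cantor construction and the covering level on that subsequence rather than at every level.
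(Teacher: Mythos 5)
First, a point of comparison: the paper does not actually prove this lemma --- it is imported from Fang and Wu \cite{FWnon} (Theorem 4.4 and Lemma 4.5 there), so the only in-paper ``proof'' is a citation. Your overall architecture --- the clustering identity $\sum_{m\ge M}\frac{1}{m(m-1)}=\frac{1}{M-1}$ for cylinders sharing a prefix, a cover by cluster intervals for the upper bound, and a Cantor subset with a mass distribution for the lower bound --- is the standard route and matches the strategy the paper itself uses for the closely related Theorem \ref{ybphi}. The normalization of $\varphi$ to an integer-valued non-decreasing function is also correct and harmless.

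There is, however, a genuine gap in your lower bound. With digits ranging over $\asymp\varphi(n)$ integers starting at $\varphi(n)$, the mass distribution principle requires a bound at \emph{every} scale, so what your computation actually delivers is the exponent $1-\limsup_n\frac{\log\varphi(n)}{\sum_{j\le n}\log\varphi(j)}$, and this quantity is not controlled by $B$ alone. For example, take $\log\varphi(n)=B^n$ when $n$ is a power of two and $\log\varphi(n)=1$ otherwise: the value of $B$ is unchanged, yet at the peak levels $\frac{\log\varphi(n)}{\sum_{j\le n}\log\varphi(j)}\to 1$, so your Cantor set has local dimension $0$ there. Your proposed remedy --- ``anchor the construction on the extremal subsequence'' --- cannot work for the lower bound, because membership in $E$ constrains $d_n$ at every index, not just along a subsequence. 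The missing idea is to \emph{inflate} $\varphi$ at the valley levels: replace it by the regularized majorant $T_j=\sup_{n\ge j}\varphi(n)^{(B+\varepsilon)^{j-n}}$, which dominates $\varphi$, has the same exponent $B$, and satisfies $T_{j+1}\le T_j^{\,B+\varepsilon}$, so that $\sum_{j\le n}\log T_j$ behaves geometrically and the local exponent is at least $\frac{1}{B+\varepsilon}$ at all scales. This is precisely the device used in the paper's proof of Theorem \ref{ybphi} via Lemma \ref{1}, and it is indispensable here as well. A secondary weakness is the upper bound: the cluster-interval cover is the right object, but the convergence of $\sum|J|^s$ over the infinitely many admissible prefixes $(\sigma_1,\dots,\sigma_{n-1})$ with $\sigma_j\ge\varphi(j)$ for $s>1/B$ is the actual technical content of the Fang--Wu lemma; your ``organise by first-crossing levels'' names a plan rather than an argument, and the asserted value $\frac{B}{2B-1}$ for what the naive cylinder cover yields is stated without justification.
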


The following lemma provides a powerful method to estimate the lower bound for the Hausdorff dimension of fractal sets associated with the growth rate of the digits in Engel series.

\begin{lemma}[{\cite[Lemma 2.6.]{SW2}}]\label{1}
Let $\{t_{n}\}_{n \geq 1}$ be a non-decreasing sequence of real numbers with $t_{1} \geq2$ and $t_{n}\to \infty$ as $n \to \infty$. Write
\[
\mathbb{E}(\{t_n\}):=\big\{x\in(0,1): nt_{n}<d_{n}(x)\leq(n+1)t_{n}, \forall n\geq1\big\}.
\]
Then
\[
\dim_\mathrm{H}\mathbb{E}(\{t_{n}\})=\frac{1}{1+\eta},
\]
where $\eta$ is given by
\[
\eta:= \limsup_{n \to \infty} \frac{\log(n+1)!+\log t_{n+1}}{\log (t_{1}\cdots t_{n})}.
\]
\end{lemma}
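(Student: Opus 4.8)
The plan is to treat $\mathbb{E}(\{t_n\})$ as a homogeneous Moran-type construction and to prove the two inequalities $\dim_{\rm H}\mathbb{E}(\{t_n\})\le \frac{1}{1+\eta}$ and $\dim_{\rm H}\mathbb{E}(\{t_n\})\ge \frac{1}{1+\eta}$ separately. The basic observation that organizes everything is that the constraint $nt_n<d_n(x)\le (n+1)t_n$ decouples across levels: since $\{t_n\}$ is non-decreasing, any choice $\sigma_k\in(kt_k,(k+1)t_k]$ automatically satisfies $\sigma_k<\sigma_{k+1}$, so by Proposition \ref{AD} every sequence of digits drawn from the prescribed windows is admissible. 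Consequently the number $N_k$ of admissible choices for the $k$-th digit equals the number of integers in $(kt_k,(k+1)t_k]$, which is comparable to $t_k$, and the level-$n$ cylinders of $\mathbb{E}(\{t_n\})$ arise from free independent choices, $\prod_{k=1}^n N_k$ of them.

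For the upper bound I would cover $\mathbb{E}(\{t_n\})$ by its level-$n$ cylinders and estimate $\sum|I_n|^s$ using the exact length formula of Proposition \ref{cylinder}. Because of the decoupling above, the sum factorises as $\big(\prod_{k<n}\sum_{\sigma_k}\sigma_k^{-s}\big)\big(\sum_{\sigma_n}(\sigma_n(\sigma_n-1))^{-s}\big)$, and each inner sum is controlled by replacing $\sigma_k$ with its order of magnitude $kt_k$, giving $\sum_{\sigma_k}\sigma_k^{-s}\asymp t_k^{1-s}k^{-s}$ and $\sum_{\sigma_n}(\sigma_n(\sigma_n-1))^{-s}\asymp t_n^{1-2s}n^{-2s}$. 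Collecting the factors yields $\log\sum|I_n|^s \approx (1-s)\log(t_1\cdots t_n)-s\big(\log(n+1)!+\log t_{n+1}\big)$ up to terms absorbed by the $\limsup$, so the sum tends to $0$ along the subsequence realising $\eta$ precisely when $\frac{1-s}{s}<\eta$, i.e. $s>\frac{1}{1+\eta}$. Since $\liminf_n\sum_{\text{level }n}|I_n|^s=0$ suffices for the dimension bound, this gives $\dim_{\rm H}\mathbb{E}(\{t_n\})\le \frac{1}{1+\eta}$.

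For the lower bound I would place the natural uniform (Bernoulli-type) measure $\mu$ on $\mathbb{E}(\{t_n\})$ assigning mass $\prod_{k=1}^n N_k^{-1}$ to every level-$n$ cylinder, and invoke the mass distribution principle. The cylinder estimates $\mu(I_n)\asymp (t_1\cdots t_n)^{-1}$ and $|I_n|\asymp\big(n!\,(t_1\cdots t_n)\,n\,t_n\big)^{-1}$ give $\liminf_n \frac{\log\mu(I_n)}{\log|I_n|}\ge\frac{1}{1+\eta}$, matching the target exponent. The essential remaining task is to pass from cylinders to arbitrary balls, namely to show $\mu(B(x,r))\lesssim r^{s}$ for every small $r$ whenever $s<\frac{1}{1+\eta}$. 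Here the Engel geometry is favourable: by Proposition \ref{cylinder} the children of a fixed parent tile a subinterval with no gaps, and because the window $(nt_n,(n+1)t_n]$ has bounded multiplicative width, all retained children have comparable length $\asymp (Q_{n-1}n^2t_n^2)^{-1}$, where $Q_{n-1}=\sigma_1\cdots\sigma_{n-1}$. Thus for $|I_n|\le r<|I_{n-1}|$ a ball $B(x,r)$ meets only a controlled number of level-$n$ cylinders lying in at most a bounded number of parents, and $\mu(B(x,r))$ is bounded by that count times the common child-measure.

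The step I expect to be the main obstacle is exactly this ball-to-cylinder comparison in the lower bound: one must quantify how many retained level-$n$ cylinders a ball of radius $r$ can intersect and relate the inter-cylinder spacing, including the gaps created by the discarded digits outside the window, to $r$, so that $\mu(B(x,r))\lesssim r^{s}$ holds with $s$ arbitrarily close to $\frac{1}{1+\eta}$. The bounded size-ratio among siblings and the contiguity furnished by Proposition \ref{cylinder} are precisely what make this estimate tractable, after which the mass distribution principle delivers $\dim_{\rm H}\mathbb{E}(\{t_n\})\ge\frac{1}{1+\eta}$ and completes the proof.
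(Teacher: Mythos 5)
The paper does not actually prove this lemma; it is imported verbatim from \cite[Lemma 2.6]{SW2}, so there is no in-paper argument to compare against. Judged on its own terms, your lower-bound plan is essentially sound: the decoupling of the digit windows via Proposition \ref{AD} is correct, the uniform Bernoulli-type measure is the right one, and you correctly identify the ball-versus-cylinder comparison as the crux (the worst balls are those that swallow the entire cluster of retained children of a level-$n$ cylinder, and it is precisely there that the quantity $\log(n+1)!+\log t_{n+1}$ enters). The upper bound as written, however, has a genuine gap.

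The gap is in the sentence claiming the discrepancy is ``absorbed by the $\limsup$''. Covering $\mathbb{E}(\{t_n\})$ by \emph{full} level-$n$ cylinders gives, by your own factorisation,
\[
\log\sum_{\text{level }n}|I_n|^s \asymp (1-s)\log(t_1\cdots t_n)-s\bigl(\log n!+\log (nt_n)\bigr),
\]
and replacing $\log t_n$ by $\log t_{n+1}$ is not harmless: the difference $\log t_{n+1}-\log t_n$ can be of the same order as $\log(t_1\cdots t_n)$. Concretely, for $\log t_n=\theta^{\,n}$ with $\theta>1$ one has $\eta=\theta-1$, while $\limsup_n\bigl(\log n!+\log(nt_n)\bigr)/\log(t_1\cdots t_n)=(\theta-1)/\theta$; the full-cylinder cover then only yields $\dim_{\rm H}\mathbb{E}\le\theta/(2\theta-1)$, strictly larger than the true value $1/\theta$. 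The standard fix, which is what the cited proof relies on, is to observe that inside $I_n(\sigma_1,\dots,\sigma_n)$ the set $\mathbb{E}$ lies in the much shorter interval $J_n(\sigma_1,\dots,\sigma_n):=I_n\cap\{x: d_{n+1}(x)>(n+1)t_{n+1}\}$, which by Proposition \ref{cylinder} is the left-most piece of $I_n$ of length about $\bigl(\sigma_1\cdots\sigma_n\,(n+1)t_{n+1}\bigr)^{-1}$. Covering by the $J_n$'s instead of the $I_n$'s produces exactly $(1-s)\log(t_1\cdots t_n)-s\bigl(\log(n+1)!+\log t_{n+1}\bigr)$ and hence the bound $1/(1+\eta)$. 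Without this trimming step your upper bound does not close.
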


\section{Topological properties}
In this section, we will study the topological properties of $\lambda: (0,1)\to [0,\infty]$. More precisely, we are interested in the fine structure of the level set $$\{x \in (0,1): \lambda(x) =\alpha\}$$ for any $0 \leq \alpha\leq \infty$. A natural question that arises here is whether these level sets are empty? That is to say, given $0 \leq \alpha\leq \infty$, does there exist $x_0 \in (0,1)$ such that $\lambda(x_0) = \alpha$?
We will give a positive answer to this question by showing the intermediate value property of $\lambda$.
See Oxtoby \cite{Oxt} for the relevant terminologies in topology.

\begin{lemma}\label{IVP}
For any $0 \leq \alpha\leq \infty$, there exists an irrational number $x \in (0,1)$ such that $\lambda(x) =\alpha$.
\end{lemma}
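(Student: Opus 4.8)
The plan is to realise each prescribed value $\alpha$ as the exponent of convergence of an explicitly constructed irrational number, by prescribing its Engel digits directly. The key reduction is the following: since every irrational $x\in(0,1)$ has non-decreasing digits $2\le d_1(x)\le d_2(x)\le\cdots$ tending to infinity, formula \eqref{lam} is available and yields
\[
\lambda(x)=\limsup_{n\to\infty}\frac{\log n}{\log d_n(x)}.
\]
Combined with Proposition \ref{AD}, this means it is enough to produce, for each $\alpha\in[0,\infty]$, a sequence of integers $\{\sigma_n\}$ satisfying $2\le\sigma_1\le\sigma_2\le\cdots$ and $\sigma_n\to\infty$ with $\limsup_{n\to\infty}\frac{\log n}{\log\sigma_n}=\alpha$. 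Such a sequence is admissible and infinite, so by Proposition \ref{AD} it is the digit sequence of an irrational $x\in(0,1)$ (unique, since the cylinders of Proposition \ref{cylinder} have lengths shrinking to zero), and that $x$ then satisfies $\lambda(x)=\alpha$.

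I would construct the sequences case by case, arranging in fact that the genuine limit (not merely the limsup) equals $\alpha$. For $\alpha=0$, take $\sigma_n=2^n$, so that $\frac{\log n}{\log\sigma_n}=\frac{\log n}{n\log 2}\to 0$. For $0<\alpha<\infty$, take $\sigma_n=\lceil n^{1/\alpha}\rceil+1$; this is non-decreasing with $\sigma_1=2$, diverges to infinity, and satisfies $\log\sigma_n=\tfrac1\alpha\log n+O(n^{-1/\alpha})$, whence $\frac{\log n}{\log\sigma_n}\to\alpha$. For $\alpha=\infty$, I would choose a sequence that still diverges but only logarithmically, for instance $\sigma_n=\lfloor\log(n+1)\rfloor+2$, so that $\log\sigma_n\sim\log\log n$ and $\frac{\log n}{\log\sigma_n}\sim\frac{\log n}{\log\log n}\to\infty$.

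It then remains only to verify in each case the three admissibility conditions of Proposition \ref{AD} (monotonicity, $\sigma_1\ge2$, divergence) and to confirm the stated asymptotics. These verifications are elementary; the only point requiring a little care is the middle case, where one must check that the rounding in $\lceil n^{1/\alpha}\rceil$ does not disturb the leading asymptotic $\log\sigma_n=\tfrac1\alpha\log n+o(\log n)$, so that the limit is exactly $\alpha$ rather than merely trapped between two positive constants. I expect this modest asymptotic bookkeeping in the range $0<\alpha<\infty$ to be the main obstacle; the boundary cases $\alpha=0$ and $\alpha=\infty$ are immediate once a monotone divergent sequence with the right growth has been chosen.
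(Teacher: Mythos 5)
Your proposal is correct and follows essentially the same route as the paper: construct an explicit admissible digit sequence for each $\alpha$ via Proposition \ref{AD} (the paper uses $\sigma_n=\lceil (n+1)^{1/\alpha}\rceil$ for $0<\alpha<\infty$ and $\sigma_n=\lceil\log n\rceil$ for $\alpha=\infty$, cosmetically different from your choices) and then evaluate $\lambda(x)$ through formula \eqref{lam}. The asymptotic bookkeeping you flag is harmless and handled identically in the paper.
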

\begin{proof}
For $\alpha=0$, the proof is trivial. For $\alpha \in (0,\infty)$, let $x:=\langle\sigma_1,\sigma_2,\cdots,\sigma_n,\cdots\rangle$ with $\sigma_n =\lceil(n+1)^{1/\alpha}\rceil$ for all $n \geq 1$.
Then we have $2 \leq \sigma_1 \leq \sigma_2 \leq \cdots \leq \sigma_n \leq \cdots$ and $\sigma_n \to \infty$ as $n\to \infty$.
By Proposition \ref{AD}, we deduce that $x$ is irrational and $d_n(x)=\sigma_n$ for all $n \geq 1$. In view of \eqref{lam} and \eqref{cal}, we obtain
\[
\lambda(x)= \limsup_{n \to \infty} \frac{\log n}{\log d_n(x)} =\alpha.
\]
For $\alpha =\infty$, let $x:=\langle\sigma_1,\sigma_2,\cdots,\sigma_n,\cdots\rangle$ with $\sigma_1=\sigma_2=2$ and $\sigma_n =\lceil \log n\rceil$ for all $n \geq 3$. Then $x$ is irrational and $\lambda(x)=\infty$.
Therefore, the proof is completed.
\end{proof}

Furthermore, the following result shows that there are uncountably many irrational numbers such that their exponents of convergence can be any prescribed number.

\begin{theorem}\label{Dense}
For any $0 \leq \alpha\leq \infty$, the level set $\{x \in (0,1): \lambda(x) =\alpha\}$ is uncountable and dense in $(0,1)$.
\end{theorem}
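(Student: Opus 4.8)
The plan is to establish the stronger statement that \emph{every} open subinterval of $(0,1)$ contains a continuum of irrational numbers $x$ with $\lambda(x)=\alpha$; density and uncountability then follow at once. Throughout I would use the formula
\[
\lambda(x)=\limsup_{n\to\infty}\frac{\log n}{\log d_n(x)},
\]
valid for all irrational $x\in(0,1)$ by \eqref{lam} and \eqref{cal}, which shows that $\lambda(x)$ is insensitive to any change of finitely many digits.

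First I would reduce to a cylinder. Given an open interval $(a,b)\subseteq(0,1)$, choose any irrational $y\in(a,b)$ and consider its cylinders $I_N\big(d_1(y),\dots,d_N(y)\big)$. Each contains $y$, and by Proposition \ref{cylinder} its length is $\big(d_1(y)\cdots d_N(y)(d_N(y)-1)\big)^{-1}\le 2^{-N}\to0$; hence $I_N\subseteq(a,b)$ once $N$ is large. It therefore suffices to show that an arbitrary admissible prefix $(\tau_1,\dots,\tau_N)\in\Sigma_N$ can be completed to a continuum of infinite admissible sequences all having exponent of convergence $\alpha$, since each such completion determines a point of the level set lying inside $I_N(\tau_1,\dots,\tau_N)\subseteq(a,b)$.

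The completions are built by a block scheme. Fix strictly increasing integers $c_1<c_2<\cdots$ with $c_1\ge\tau_N$ and $c_k\to\infty$, and a rapidly increasing sequence of indices $N=p_0<p_1<p_2<\cdots$; let the digit of index $n$ be $c_k$ for $p_{k-1}<n\le p_k$. By Proposition \ref{AD} the resulting sequence is admissible, and since on the $k$-th block the quantity $\tfrac{\log n}{\log c_k}$ is largest at $n=p_k$, a direct computation gives
\[
\lambda(x)=\limsup_{k\to\infty}\frac{\log p_k}{\log c_k}.
\]
I would then tune the two sequences to make this limsup equal to $\alpha$: for $\alpha\in(0,\infty)$ take $c_k=\big\lceil p_k^{1/\alpha}\big\rceil$; for $\alpha=0$ take $c_k=2^{p_k}$; and for $\alpha=\infty$ take $p_k=2^{2^{k}}$ and $c_k=\lceil\log p_k\rceil$. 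Adjusting the finitely many terms needed to secure $c_1\ge\tau_N$ does not affect the limsup. The endpoints $\alpha\in\{0,\infty\}$ are already covered by Lemma \ref{IVP} for existence, but the block scheme handles all $\alpha$ uniformly.

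Finally, to produce uncountably many completions I would attach a binary branching that leaves the limsup intact. For $\varepsilon=(\varepsilon_k)\in\{0,1\}^{\mathbb N}$, replace the $k$-th block endpoint $p_k$ by $p_k+\sum_{j\le k}\varepsilon_j$, i.e.\ lengthen the $k$-th block by $\varepsilon_k$. The cumulative shift is at most $k$, which is negligible against $p_k$, so $\frac{\log p_k}{\log c_k}$ is unchanged in the limit and every branch still satisfies $\lambda=\alpha$. Since the $c_k$ are strictly increasing, two distinct parameters $\varepsilon\ne\varepsilon'$ force different digits at the first block where their lengths diverge, so the branches are pairwise distinct irrationals, giving an injection of $\{0,1\}^{\mathbb N}$ into the level set inside $(a,b)$. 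The only point requiring care is the bookkeeping of these index shifts, ensuring that admissibility is preserved and that $\frac{\log p_k}{\log c_k}\to\alpha$ survives the perturbation; this is routine provided the $p_k$ grow fast enough that a bounded change of block length is asymptotically invisible.
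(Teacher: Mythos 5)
Your proposal is correct, but it is organized quite differently from the paper's proof, and it actually establishes a slightly stronger statement. The paper anchors everything on the single point $x_0$ supplied by Lemma \ref{IVP}: uncountability is obtained by adding $\varepsilon_k\in\{0,1\}$ to the digits of $x_0$ at indices $n_k$ where $d_{n_k}(x_0)<d_{n_k+1}(x_0)$ (so that admissibility survives the bump), and density is obtained separately by showing that the set $D(x_0)$ of points whose digit tails coincide with those of $x_0$ is dense --- the technical work there is the gluing of an arbitrary prefix of $y$ onto a tail of $x_0$ via a padding block, with the rational and irrational $y$ treated as separate cases. You instead build the required points from scratch inside an arbitrary cylinder $I_N(\tau_1,\dots,\tau_N)$ using constant blocks, identify $\lambda$ explicitly as $\limsup_k \frac{\log p_k}{\log c_k}$, tune $(p_k,c_k)$ to hit any prescribed $\alpha\in[0,\infty]$, and branch on block lengths to inject $\{0,1\}^{\mathbb N}$; distinctness follows because the $c_k$ are strictly increasing, and the limsup is unharmed because the cumulative shift $\sum_{j\le k}\varepsilon_j\le k$ is logarithmically negligible against $p_k$. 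What your route buys is the stronger local conclusion that \emph{every} subinterval meets the level set in a continuum, independence from Lemma \ref{IVP}, a uniform treatment of $\alpha=0$ (the paper disposes of that case by the full-measure result), and the avoidance of the two-case gluing argument; what the paper's route buys is brevity, since the existence of $x_0$ is already in hand and tail-equivalence immediately forces $\lambda(\widehat x)=\lambda(x_0)$ without any computation of the limsup. The "routine bookkeeping" you defer (choosing $p_k$ to grow fast enough that $c_k=\lceil p_k^{1/\alpha}\rceil$ is strictly increasing and that $\log(p_k+k)/\log p_k\to1$) is genuinely routine and does not hide a gap.
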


\begin{proof}
For the case $\alpha =0$, it suffices to note that $\lambda(x) =0$ for Lebesgue almost all $x\in (0,1)$.
For $0 <\alpha\leq \infty$, it follows from Lemma \ref{IVP} that there exists an irrational number $x_0 \in (0,1)$ such that $\lambda(x_0) =\alpha$.
Note that $d_n(x_0) \to \infty$ as $n \to \infty$, so we can choose an infinite sequence $\{n_k\}_{k\geq 1}$ such that $d_{n_k}(x_0)< d_{n_k+1}(x_0)$ for all $k \geq 1$.
Let $(\varepsilon_1,\varepsilon_2,\cdots,\varepsilon_k,\cdots) \in \{0,1\}^{\mathbb{N}}$ be fixed. We define a new real number as
$\widehat{x}:= \langle\widehat{d}_1,\widehat{d}_2,\cdots,\widehat{d}_n,\cdots\rangle$
with
\[
\widehat{d}_n =
\left\{
  \begin{array}{ll}
    d_n(x_0), & \hbox{if $n\neq n_k$, $\forall\,k \geq 1$;} \\
    d_n(x_0)+\varepsilon_k, & \hbox{if $n=n_k$ for some $k \geq 1$.}
  \end{array}
\right.
\]
Then the sequence $\{\widehat{d}_n\}_{n\geq1}$ is non-decreasing with $\widehat{d}_1 \geq 2$ and $\widehat{d}_n \to \infty$ as $n \to \infty$.
By Proposition \ref{AD} and the algorithm of the Engel series expansion of $\widehat{x}$, we deduce that $d_n(\widehat{x}) =\widehat{d}_n$ and $ d_n(x_0) \leq d_n(\widehat{x}) \leq  d_n(x_0) +1$ for all $n \geq1$. According to the formula (\ref{cal}), we get $\lambda(\widehat{x}) = \lambda(x_0) = \alpha$. Note that the space $\{0,1\}^{\mathbb{N}}$ is uncountable, so there are uncountably infinite many irrationals $x \in (0,1)$ such that $\lambda(x)=\alpha$.

Next we will prove that the level set $\{x \in (0,1): \lambda(x) =\alpha\}$ is dense in $(0,1)$. To do this, let
\[
D(x_0):=\bigcup_{N \geq 1}\big\{x\in (0,1): d_n(x)= d_n(x_0), \forall\,n\geq N\big\}.
\]
Then $D(x_0)$ is a subset of $\{x \in (0,1): \lambda(x) =\alpha\}$. So it is sufficient to show $D(x_0)$ is dense in $(0,1)$.
Let $y \in (0,1)$ be fixed.

(i) If $y$ is rational, then it can be written as $y=\langle d_1(y),\cdots,d_k(y)\rangle$ for some $k\in \mathbb{N}$. For $m \geq 1$, let
$\ell_m= \inf\{\ell\geq k+2: d_{\ell}(x_0) \geq d_k(y)+m\}$. Since $d_n(x_0) \to \infty$ as $n \to \infty$, we have $\ell_m$ is finite, $\ell_{m+1} \geq \ell_m$ for all $m \geq 1$ and $\ell_m \to \infty$ as $m \to \infty$. Hence
\[
2 \leq d_1(y)\leq \cdots\leq d_k(y)< d_k(y)+m \leq d_{\ell_m}(x_0) \leq d_{\ell_m+1}(x_0) \leq d_{\ell_m+2}(x_0) \leq \cdots.
\]
Put
\[
y_m:=\langle d_1(y),\cdots,d_k(y),d_k(y)+m, \underbrace{ d_{\ell_m}(x_0),\cdots,d_{\ell_m}(x_0)}_{\ell_m -k-1}, d_{\ell_m+1}(x_0),d_{\ell_m+2}(x_0),\cdots\rangle.
\]
Then $d_n(y_m)=d_{n}(x_0)$ for all $n \geq \ell_m$, which yields $y_m \in D(x_0)$ for all $m \geq 1$.
Note that
\begin{align*}
0<y_m-y& =\frac{1}{d_1(y) \cdots d_k(y)} \left(\frac{1}{d_k(y)+m} + \frac{1}{(d_k(y)+m)d_{\ell_m}(x_0)}+\cdots\right)\\
& \leq \frac{1}{d_1(y) \cdots d_k(y)} \left(\frac{1}{d_k(y)+m} + \frac{1}{(d_k(y)+m)^2} +\cdots\right)\\
&= \frac{1}{d_1(y) \cdots d_k(y)(d_k(y)+m-1)}.
\end{align*}
Thus $y_m \to y$ as $m \to \infty$.

(ii) If $y$ is irrational, then we write $y=\langle d_1(y),\cdots,d_n(y),\cdots\rangle$. For $m \geq 1$,
let $\ell_m= \inf\{\ell\geq m+1: d_{\ell}(x_0) \geq d_m(y)\}$ and
\[
y_m:=\langle d_1(y),\cdots,d_m(y), \underbrace{ d_{\ell_m}(x_0),\cdots,d_{\ell_m}(x_0)}_{\ell_m -m}, d_{\ell_m+1}(x_0),d_{\ell_m+2}(x_0),\cdots\rangle.
\]
Then we have $d_n(y_m)=d_{n}(x_0)$ for all $n \geq \ell_m$ and hence $y_m \in D(x_0)$ for all $m \geq 1$. Note that $y_m$ and $y$ are both in the cylinder $I_m(d_1(y),\cdots,d_m(y))$, so we conclude from
Proposition \ref{cylinder} that
\[
0 <|y_m-y| \leq |I_m(d_1(y),\cdots,d_m(y))| \leq \frac{1}{2^m},
\]
which gives $y_m \to y$ as $m \to \infty$.

In both cases, we can always find a sequence $\{y_m\}_{m\geq 1}$ of numbers in $D(x_0)$ such that $y_m \to y$ as $m \to \infty$, i.e., $D(x_0)$ is dense in $(0,1)$. Then the desired results follows.

\end{proof}

Observe that a set and its closure have the same box-counting dimension (see Proposition 3.4 of \cite{Fal90}). As a consequence of Theorem \ref{Dense}, the level set $\{x \in (0,1): \lambda(x) =\alpha\}$ has full box-counting dimension. Denote by $\dim_{\rm B}$ the box-counting dimension.

\begin{corollary}
For any $0 \leq \alpha\leq \infty$, $\dim_{\rm B}\{x \in (0,1): \lambda(x) =\alpha\}=1$.
\end{corollary}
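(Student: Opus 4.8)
The plan is to exploit the density statement already secured in Theorem \ref{Dense} together with the well-known fact that the upper and lower box-counting dimensions of a bounded set coincide with those of its closure. First I would record that, by Theorem \ref{Dense}, the level set $E_\alpha:=\{x\in(0,1):\lambda(x)=\alpha\}$ is dense in $(0,1)$ for every $0\leq\alpha\leq\infty$. Density in $(0,1)$ immediately gives that the closure $\overline{E_\alpha}$ (taken in $\mathbb{R}$) equals the closed interval $[0,1]$, since the closure of a dense subset of $(0,1)$ contains all of $(0,1)$ and hence, being closed, its two endpoints as well.

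Next I would invoke the closure-invariance of box-counting dimension, namely Proposition 3.4 of \cite{Fal90}, which asserts that $\dim_{\rm B}F=\dim_{\rm B}\overline{F}$ for any bounded set $F$ for which the box-counting dimension exists (and likewise for the upper and lower variants). Applying this to $F=E_\alpha$ yields
\[
\dim_{\rm B}E_\alpha=\dim_{\rm B}\overline{E_\alpha}=\dim_{\rm B}[0,1]=1,
\]
which is precisely the claimed equality. This completes the argument.

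There is essentially no analytic obstacle here: the entire content has been front-loaded into Theorem \ref{Dense}, and the remaining deduction rests only on the elementary observation that a dense subset of $(0,1)$ has closure $[0,1]$ and on the cited closure-invariance of $\dim_{\rm B}$. The one conceptual point worth flagging, lest the reader find the conclusion paradoxical in the cases (for instance $\alpha\in(0,\infty)$) where $E_\alpha$ has Hausdorff dimension strictly less than $1$, is that box-counting dimension is \emph{not} stable under countable unions and need not be monotone-consistent with Hausdorff dimension on dense sets; indeed even a countable dense set such as $\mathbb{Q}\cap(0,1)$ has box-counting dimension $1$. Thus the corollary reflects a genuine feature of the box-counting dimension rather than any sharpening of the fractal estimates, and no finer covering analysis is required.
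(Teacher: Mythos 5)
Your argument is correct and is essentially identical to the paper's: the authors likewise combine the density of the level set from Theorem \ref{Dense} with the closure-invariance of box-counting dimension (Proposition 3.4 of \cite{Fal90}) to conclude that $\dim_{\rm B}\{x \in (0,1): \lambda(x) =\alpha\}=1$. Nothing is missing.
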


The following result is an immediate result of Theorem \ref{Dense} which shows that $\lambda(x)$ is discontinuous as a function of $x$ in $(0,1)$.

\begin{corollary}
$\lambda: (0,1)\to [0,\infty]$ is everywhere discontinuous.
\end{corollary}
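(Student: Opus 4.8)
The plan is to derive everywhere discontinuity directly from the density statement in Theorem \ref{Dense}, with essentially no computation. Fix an arbitrary point $x_0 \in (0,1)$ and set $\alpha_0 := \lambda(x_0) \in [0,\infty]$. Since the target space $[0,\infty]$ has more than one element, I can choose some value $\beta \in [0,\infty]$ with $\beta \neq \alpha_0$ (for instance $\beta = 1$ if $\alpha_0 \neq 1$, and $\beta = 0$ otherwise). The whole argument then rests on exhibiting points arbitrarily close to $x_0$ on which $\lambda$ takes the value $\beta$ rather than $\alpha_0$.

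First I would invoke Theorem \ref{Dense}, which asserts that the level set $\{x \in (0,1): \lambda(x) = \beta\}$ is dense in $(0,1)$. Density at the point $x_0$ guarantees a sequence $\{x_n\}_{n \geq 1}$ contained in this level set with $x_n \to x_0$ as $n \to \infty$. By construction $\lambda(x_n) = \beta$ for every $n$, so $\lim_{n \to \infty} \lambda(x_n) = \beta \neq \alpha_0 = \lambda(x_0)$. This shows that $\lambda$ fails the sequential characterisation of continuity at $x_0$, and since $x_0 \in (0,1)$ was arbitrary, $\lambda$ is discontinuous at every point, i.e. everywhere discontinuous.

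I do not anticipate a genuine obstacle here; the statement is an immediate corollary and the only point requiring the slightest care is the selection of $\beta \neq \alpha_0$, which is always possible because the value $\alpha_0$ cannot exhaust all of $[0,\infty]$. If one prefers a more quantitative phrasing, the same sequence $\{x_n\}$ witnesses that $\lambda$ is not even continuous after restriction to any neighbourhood of $x_0$, but for the corollary as stated the sequential argument above suffices.
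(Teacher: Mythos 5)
Your proposal is correct and follows essentially the same route as the paper: fix a point, pick a value $\beta$ different from $\lambda$ at that point, use the density of the corresponding level set from Theorem \ref{Dense} to produce a convergent sequence on which $\lambda$ is constantly $\beta$, and conclude via the sequential criterion for continuity. No gaps.
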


\begin{proof}
Let $z \in (0,1)$ be fixed and denote $\beta:=\lambda(z)$. For a number $\gamma \neq \beta$, we deduce from Theorem \ref{Dense} that the level set $\{x \in (0,1): \lambda(x) =\gamma\}$ is dense in $(0,1)$. For the given $z$, there exists $\{z_n\}_{n\geq 1}$ such that $\lambda(z_n)=\gamma$ for all $n\geq 1$ and $z_n \to z$ as $n \to \infty$. Since $\lim_{n \to \infty}\lambda(z_n) = \gamma \neq \beta = \lambda(z)$, we obtain $\lambda: (0,1)\to [0,\infty]$ is discontinuous at $z$. Hence it is everywhere discontinuous.
\end{proof}

Next we would like to investigate the topological structure of the level set $\{x\in (0,1): \lambda(x) =\alpha\}$ in the sense of Baire category.
It will be shown that the level set is of the first Baire category for $\alpha \in [0,\infty)$.

\begin{theorem}\label{RF}
For any $0 \leq \alpha <\infty$, the set $\{x \in (0,1): \lambda(x) \leq\alpha\}$ is of the first Baire category in $(0,1)$ and the set $\{x \in (0,1): \lambda(x) =\infty\}$ is residual in $(0,1)$.
\end{theorem}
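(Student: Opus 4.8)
The plan is to derive the residual statement from the first-category one and then to focus on the latter. Since $\lambda(x)$ takes values in $[0,\infty]$, one has
\[
\{x \in (0,1): \lambda(x) = \infty\} = (0,1) \setminus \bigcup_{k \geq 1} \{x \in (0,1): \lambda(x) \leq k\},
\]
because $\lambda(x) < \infty$ precisely when $\lambda(x) \leq k$ for some positive integer $k$. Consequently, once every set $\{x : \lambda(x) \leq k\}$ is shown to be of the first Baire category, the countable union $\{x : \lambda(x) < \infty\}$ is of the first category as well, and its complement $\{x : \lambda(x) = \infty\}$ is residual. Thus everything reduces to proving that $\{x : \lambda(x) \leq \alpha\}$ is of the first category for each fixed $\alpha \in [0,\infty)$.

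For this reduction I would use the identity $\lambda(x) = \limsup_{n\to\infty} \frac{\log n}{\log d_n(x)}$, valid for every irrational $x \in (0,1)$. Fix $\gamma > \alpha$. If $x$ is irrational with $\lambda(x) \leq \alpha < \gamma$, then $\frac{\log n}{\log d_n(x)} < \gamma$ for all large $n$, equivalently $d_n(x) > n^{1/\gamma}$ eventually. Writing
\[
F_N := \big\{x \in (0,1): d_n(x) \geq n^{1/\gamma} \text{ for all } n \geq N\big\},
\]
this shows every such irrational belongs to some $F_N$. Since the rationals in $(0,1)$ (those $x$ with a finite Engel expansion) form a countable, hence first-category, set, the inclusion $\{x : \lambda(x) \leq \alpha\} \subseteq \big(\mathbb{Q} \cap (0,1)\big) \cup \bigcup_{N \geq 1} F_N$ reduces the theorem to showing that each $F_N$ is nowhere dense.

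The nowhere density of $F_N$ is the crux, and the step I expect to be the main obstacle. The governing idea is that, by the monotonicity of Engel digits (Proposition \ref{AD}), a digit held at a fixed value $v$ stays constant while the threshold $n^{1/\gamma}$ grows without bound, so it must eventually drop below the threshold. Concretely, given a nonempty open set $U \subseteq (0,1)$, I would choose an irrational $\xi \in U$ and invoke Proposition \ref{cylinder} (whose cylinder-length formula forces the cylinders about $\xi$ to shrink to a point) to find a cylinder $I_m(\sigma_1,\dots,\sigma_m) \subseteq U$. Putting $v := \sigma_m$ and selecting an integer $n^* > \max\{m, N, v^{\gamma}\}$, the finite sequence $(\sigma_1,\dots,\sigma_m, v, \dots, v)$ of length $n^*$ is non-decreasing and hence admissible by Proposition \ref{AD}, so $I_{n^*}(\sigma_1,\dots,\sigma_m,v,\dots,v)$ is a nonempty subinterval of $I_m \subseteq U$. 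Every point $x$ of this sub-cylinder satisfies $d_{n^*}(x) = v < (n^*)^{1/\gamma}$ with $n^* \geq N$, so the sub-cylinder is disjoint from $F_N$. As an arbitrary nonempty open set therefore contains a nonempty open subinterval missing $F_N$, the set $F_N$ is nowhere dense.

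Putting the pieces together, $\{x : \lambda(x) \leq \alpha\}$ is contained in a countable union of nowhere dense sets and is thus of the first Baire category, and the residual conclusion for $\{x : \lambda(x) = \infty\}$ follows from the reduction above. The remaining care is purely bookkeeping: verifying that cylinders form a neighbourhood base at the irrationals, so that every open set contains one, and checking the admissibility of the constant-digit block, both supplied by Propositions \ref{AD} and \ref{cylinder}.
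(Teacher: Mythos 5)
Your proof is correct, but it follows a genuinely different route from the paper's. The paper derives the first-category claim from two previously established facts: the density of the level sets (Theorem \ref{Dense}) and the fact that $\{x:\lambda(x)\geq\alpha\}$ is a $G_\delta$ set (Lemma \ref{Gdelta}); together these make $\{x:\lambda(x)\geq\alpha+1\}$ a dense $G_\delta$, hence $\{x:\lambda(x)>\alpha\}$ is residual and its complement is of the first category, and $\{x:\lambda(x)=\infty\}$ is then handled as a countable intersection of residual sets $\{x:\lambda(x)\geq N\}$. You instead cover $\{x:\lambda(x)\leq\alpha\}$ by the countable set of rationals together with the sets $F_N=\{x: d_n(x)\geq n^{1/\gamma},\ \forall n\geq N\}$ for a fixed $\gamma>\alpha$, and prove each $F_N$ nowhere dense by hand: inside any cylinder contained in a given open set you freeze the last digit at its value $v$ for long enough that the threshold $n^{1/\gamma}$ overtakes $v$, producing (via the admissibility criterion of Proposition \ref{AD} and the cylinder structure of Proposition \ref{cylinder}) a nonempty sub-cylinder disjoint from $F_N$; the residual statement then follows by complementation of the countable union $\bigcup_k\{x:\lambda(x)\leq k\}$. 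Both arguments are sound. The paper's approach is shorter given its earlier machinery and yields the stronger structural conclusion that $\{x:\lambda(x)\geq\alpha\}$ contains a dense $G_\delta$ set; yours is self-contained and more elementary, needing neither the $G_\delta$ lemma nor the density theorem, with the monotonicity of Engel digits doing all the work in the nowhere-density step.
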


\begin{remark}
Note that any subset of a set of the first category is of the first category and any superset of a residual set is residual. For $\alpha \in [0,\infty)$, we deduce from Theorem \ref{RF} that the level set $\{x \in (0,1): \lambda(x) =\alpha\}$ is of the first category and the set $\{x \in (0,1): \lambda(x) \geq\alpha\}$ is residual.
\end{remark}

Denote by $\dim_{\rm P}$ the packing dimension.
It is known that if $E$ is a subset of $(0,1)$ with $\dim_{\rm P} E<1$, then it is of the first Baire category, see \cite[p. 65]{Edgar}. Combining this with Theorem \ref{RF}, we deduce that the set $\{x \in (0,1): \lambda(x) \geq\alpha\}$ has full packing dimension for $\alpha \in [0,\infty]$.

\begin{corollary}
For any $0 \leq \alpha\leq \infty$, $\dim_{\rm P} \{x \in (0,1): \lambda(x) \geq \alpha\} =1$.
\end{corollary}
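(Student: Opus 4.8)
The plan is to combine the topological information from Theorem~\ref{RF} with the stated dimension–category link from \cite[p.~65]{Edgar}, reasoning by contraposition. First I would fix $\alpha \in [0,\infty]$ and record that the set $E_\alpha := \{x \in (0,1): \lambda(x) \geq \alpha\}$ is residual in $(0,1)$. For $\alpha \in [0,\infty)$ this is exactly the content of the Remark following Theorem~\ref{RF}; for $\alpha = \infty$ it is immediate, since $E_\infty = \{x \in (0,1): \lambda(x) = \infty\}$ is residual by Theorem~\ref{RF} itself. Thus $E_\alpha$ is residual for every $\alpha \in [0,\infty]$, which is the single input carrying all the weight of the argument.

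Next I would argue that a residual subset of $(0,1)$ cannot be of the first Baire category. Indeed, if $E_\alpha$ were both residual and of the first category, then its complement $(0,1)\setminus E_\alpha$ would be of the first category by the definition of residual, and hence $(0,1) = E_\alpha \cup \big((0,1)\setminus E_\alpha\big)$ would be a union of two first-category sets, so of the first category itself. This contradicts the Baire category theorem, which guarantees that $(0,1)$, being a nonempty completely metrizable space, is of the second category in itself. Therefore $E_\alpha$ is \emph{not} of the first category.

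Finally I would invoke the cited result in its contrapositive form: the implication ``$\dim_{\rm P} E < 1 \Rightarrow E$ is of the first category'' is equivalent to ``$E$ not of the first category $\Rightarrow \dim_{\rm P} E \geq 1$'', so the fact that $E_\alpha$ is not of the first category forces $\dim_{\rm P} E_\alpha \geq 1$. Since $E_\alpha \subseteq (0,1)$ and $\dim_{\rm P}(0,1) = 1$, monotonicity of packing dimension gives $\dim_{\rm P} E_\alpha \leq 1$, whence $\dim_{\rm P} E_\alpha = 1$, as claimed. There is no genuine obstacle here: the statement is a short deduction, and the only points demanding a little care are the uniform treatment of the endpoint $\alpha = \infty$ and using the dimension–category implication in the correct (contrapositive) direction.
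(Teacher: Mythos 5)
Your argument is correct and is precisely the one the paper intends: Theorem~\ref{RF} (with its Remark) gives residuality of $\{x\in(0,1):\lambda(x)\geq\alpha\}$ for every $\alpha\in[0,\infty]$, and the contrapositive of the cited fact that $\dim_{\rm P}E<1$ implies $E$ is of the first category yields $\dim_{\rm P}\geq 1$, with monotonicity giving the reverse inequality. The paper leaves these steps implicit in the phrase ``combining this with Theorem~\ref{RF},'' and your write-up simply makes them explicit.
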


We point out that for any $\alpha \in [0,\infty]$, the set $\{x \in (0,1): \lambda(x) \leq\alpha\}$ has full Lebesgue measure (then it has full Hausdorff/Packing/Box-counting dimension) because $\lambda(x) =0$ for Lebesgue almost all $x\in (0,1)$.

To prove Theorems \ref{RF}, we first give a useful lemma.

\begin{lemma}\label{Gdelta}
For any $0<\alpha <\infty$, the set $\{x \in (0,1): \lambda(x) \geq\alpha\}$ is a $G_\delta$ set in $(0,1)$.
\end{lemma}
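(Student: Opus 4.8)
The plan is to exhibit $G:=\{x\in(0,1):\lambda(x)\ge\alpha\}$ as a countable intersection of open sets. First I would dispose of the rationals: since $\alpha>0$ and $\lambda(x)=0$ for every rational $x$, the set $G$ consists of irrationals only. For each irrational $x$ the digit sequence $\{d_n(x)\}$ is non-decreasing and tends to infinity, so \eqref{lam} and \eqref{cal} give $\lambda(x)=\limsup_{n\to\infty}\frac{\log n}{\log d_n(x)}$; hence $x\in G$ precisely when $x$ is irrational and this limsup is at least $\alpha$.

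Next I would put the condition $\limsup\ge\alpha$ into a form adapted to the topology. Fix $\beta_m:=\alpha-\tfrac1m$ for integers $m>1/\alpha$, so $0<\beta_m\uparrow\alpha$. Then $\limsup_n\frac{\log n}{\log d_n(x)}\ge\alpha$ holds if and only if for every such $m$ and every $N$ there is some $n\ge N$ with $\frac{\log n}{\log d_n(x)}>\beta_m$, equivalently $d_n(x)<c_{n,m}$ where $c_{n,m}:=n^{1/\beta_m}$. This yields the representation
\[
G=\bigcap_{m>1/\alpha}\ \bigcap_{N\ge1}\ \bigcup_{n\ge N}\big\{x\in(0,1): d_n(x)\ \text{is defined and}\ d_n(x)<c_{n,m}\big\},
\]
in which the demand that $d_n(x)$ be defined for arbitrarily large $n$ automatically forces $x$ to be irrational, so no rational slips in.

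The inner set $\{x:d_n(x)<c_{n,m}\}$ is a union of order-$n$ cylinders, and here is the one genuine obstacle: by Proposition \ref{cylinder} each cylinder $I_n(\sigma_1,\ldots,\sigma_n)$ is a \emph{half-open} interval (it contains its left endpoint, the rational $\langle\sigma_1,\ldots,\sigma_n\rangle$, but not its right endpoint), so these unions are not open and the displayed representation is not yet a $G_\delta$. To repair this I would pass to open cylinders by setting
\[
U_{N,m}:=\bigcup_{n\ge N}\ \bigcup_{\substack{(\sigma_1,\ldots,\sigma_n)\in\Sigma_n\\ \sigma_n<c_{n,m}}}\mathrm{int}\,I_n(\sigma_1,\ldots,\sigma_n),
\]
which is open, being a union of open intervals, and then claim $G=\bigcap_{m>1/\alpha}\bigcap_{N\ge1}U_{N,m}$. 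Since this is a countable intersection of open sets, the claim finishes the proof.

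The verification splits into two short inclusions. For $G\subseteq U_{N,m}$: given irrational $x\in G$, the limsup condition yields some $n\ge N$ with $d_n(x)<c_{n,m}$; because $x$ is irrational it differs from the rational left endpoint of its cylinder $I_n(d_1(x),\ldots,d_n(x))$ and thus lies in the corresponding open cylinder, so $x\in U_{N,m}$. For the reverse inclusion, note $U_{N,m}\subseteq\bigcup_{n\ge N}\{d_n<c_{n,m}\}$, so if $x\in U_{N,m}$ for all admissible $m,N$ then for each $m$ there are infinitely many $n$ with $\frac{\log n}{\log d_n(x)}>\beta_m$ (in particular $x$ is irrational), whence $\lambda(x)\ge\beta_m$ for every $m$ and therefore $\lambda(x)\ge\alpha$. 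The only delicate point is the half-openness of cylinders; once it is handled by replacing each cylinder with its interior and observing that every irrational is interior to its own cylinder, the remainder is routine, and I expect no further obstacles.
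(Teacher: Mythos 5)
Your proof is correct and follows essentially the same route as the paper: the same decomposition of $\{x:\lambda(x)\ge\alpha\}$ as $\bigcap_m\bigcap_N\bigcup_{n\ge N}$ of sets defined by a polynomial bound $d_n(x)<n^{1/(\alpha-1/m)}$, each of which is a union of order-$n$ cylinders. The only difference is how the half-openness of cylinders is handled: the paper works in the subspace $\mathbb{I}=(0,1)\setminus\mathbb{Q}$, where the relative cylinders are open, and then uses that $\mathbb{I}$ is itself a $G_\delta$ in $(0,1)$, whereas you replace each cylinder by its interior directly in $(0,1)$ and check that no irrational is lost and no rational survives the intersection over all $N$ --- a slightly more self-contained variant reaching the same conclusion.
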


\begin{proof}
Let $\mathbb{I}:= (0,1)\backslash\mathbb{Q}$. Note that $\lambda(x)=0$ for all $x \in (0,1)\cap\mathbb{Q}$.
For $\alpha \in (0,\infty)$, by \eqref{lam} and \eqref{cal}, we obtain
\begin{align}\label{B}
\big\{x \in (0,1): \lambda(x) \geq\alpha\big\} &= \left\{x \in \mathbb{I}:  \limsup_{n \to \infty} \frac{\log n}{\log d_n(x)} \geq \alpha\right\}\nonumber\\
&= \bigcap^\infty_{k=k_0} \bigcap^\infty_{N=1} \bigcup^\infty_{n=N} B(k,N,n),
\end{align}
where $k_0 := \lfloor1/\alpha\rfloor+1$ and $$B(k,N,n) = \left\{x \in \mathbb{I}: \frac{\log n}{\log d_n(x)} \geq \alpha -1/k\right\} = \left\{x \in \mathbb{I}: d_n(x) \leq n^{1/(\alpha -1/k)}\right\}.$$
Each non-empty set $B(k,N,n)$ can be written as a union of a finite number of open sets in $\mathbb{I}$. More precisely,
\begin{align*}
B(k,N,n) =\left\{x \in \mathbb{I}: d_n(x) \leq n^{1/(\alpha -1/k)}\right\} = \bigcup_{(\sigma_1,\cdots,\sigma_n)\in \mathcal{C}_n} \mathbb{I}\cap I_n(\sigma_1,\cdots,\sigma_n),
\end{align*}
where $\mathcal{C}_n:= \{(\sigma_1,\cdots,\sigma_n)\in \mathbb{N}^n: 2 \leq \sigma_1 \leq \cdots \leq \sigma_n \leq n^{1/(\alpha -1/k)}\}$ is a finite set (see Lemma 3.5 in \cite{SW}).
Since $\mathbb{I}\cap I_n(\sigma_1,\cdots,\sigma_n)$ is open in $\mathbb{I}$, we see that $B(k,N,n)$ is also open in $\mathbb{I}$ and then $\big\{x \in (0,1): \lambda(x) \geq\alpha\big\}$ is a $G_\delta$ set in $\mathbb{I}$ by \eqref{B}. Note that $\mathbb{I}$ is a $G_\delta$ set in $(0,1)$ and the intersection of two $G_\delta$ sets is still a $G_\delta$ set, we have $\{x \in (0,1): \lambda(x) \geq\alpha\}$ is a $G_\delta$ set in $(0,1)$.
\end{proof}

We are now in a position to give the proof of Theorem \ref{RF}. To prove a set is residual in $(0,1)$, it is enough to show it contains a dense $G_\delta$ subset of $(0,1)$, see for example \cite[Theorem 9.2]{Oxt}.

\begin{proof}[Proof of Theorem \ref{RF}]
For $\alpha \in [0,\infty)$, it follows from Theorem \ref{Dense} and Lemma \ref{Gdelta} that $\{x \in (0,1): \lambda(x) \geq \alpha +1\}$ is a dense $G_\delta$ set in $(0,1)$.
Then we have $\{x \in (0,1): \lambda(x) >\alpha\}$ is residual in (0,1), i.e., the set $\{x \in (0,1): \lambda(x) \leq\alpha\}$ is of the first Baire category.

According to Theorem \ref{Dense} and Lemma \ref{Gdelta}, we deduce that
$\big\{x \in (0,1): \lambda(x) \geq N\big\}$ is residual in $(0,1)$ for all $N\geq 1$.
Since
\[
\big\{x \in (0,1): \lambda(x) =\infty\big\} = \bigcap^\infty_{N=1} \big\{x \in (0,1): \lambda(x) \geq N\big\}
\]
and the intersection of countably many residual sets is still residual (see \cite[Theorem 1.4]{Oxt}),
we have $\{x \in (0,1): \lambda(x) =\infty\}$ is residual in $(0,1)$.
\end{proof}

\section{Multifractal analysis}

In this section, we propose to do the multifractal analysis of $\lambda: (0,1)\to [0,\infty]$, i.e., the Hausdorff dimension of the level set $\{x \in (0,1): \lambda(x) =\alpha\}$.
Recall that the set $\{x \in (0,1): \lambda(x) \leq\alpha\}$ has full Hausdorff/packing/box-counting dimension, and the set $\{x \in (0,1): \lambda(x) \geq\alpha\}$ has full packing dimension
(then it has full box-counting dimension), which leads to calculate its Hausdorff dimension.
It turns out that this set and the level set have the same Hausdorff dimension.

\begin{theorem}\label{lambda}
For any $0\leq \alpha \leq \infty$,
\begin{equation*}
\dim_{\rm H} \big\{x \in (0,1): \lambda(x) =\alpha\big\}=\dim_{\rm H} \big\{x \in (0,1): \lambda(x) \geq\alpha\big\}=
\left\{
  \begin{array}{ll}
    1-\alpha, & \hbox{$0\leq \alpha\leq1$;} \\
    0, & \hbox{$1<\alpha \leq \infty$.}
  \end{array}
\right.
\end{equation*}
\end{theorem}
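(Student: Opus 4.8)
The plan is to use the monotonicity $\{x:\lambda(x)=\alpha\}\subseteq\{x:\lambda(x)\ge\alpha\}$, which gives $\dim_{\rm H}\{x:\lambda(x)=\alpha\}\le\dim_{\rm H}\{x:\lambda(x)\ge\alpha\}$. So it suffices to bound $\dim_{\rm H}\{x:\lambda(x)\ge\alpha\}$ from above by the claimed value and $\dim_{\rm H}\{x:\lambda(x)=\alpha\}$ from below by the same value; the two then coincide. Throughout I use the identity $\lambda(x)=\limsup_{n\to\infty}\frac{\log n}{\log d_n(x)}$, valid for every irrational $x$ and noted after \eqref{cal}. The case $\alpha=0$ is immediate, since $\{x:\lambda(x)\ge 0\}=(0,1)$ and $\{x:\lambda(x)=0\}$ has full Lebesgue measure, both of dimension $1$; so assume $\alpha>0$.

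For the lower bound, which is only needed when $0<\alpha<1$ (for $\alpha\ge1$ the target is $0$ and nonemptiness from Theorem \ref{Dense} suffices), I would apply Lemma \ref{1}. Put $\gamma:=(1-\alpha)/\alpha>0$ and choose a non-decreasing sequence with $t_1\ge2$, $t_n\to\infty$ and $t_n\asymp n^{\gamma}$, e.g.\ $t_n=\lceil n^{\gamma}\rceil+2$. For $x\in\mathbb{E}(\{t_n\})$ one has $nt_n<d_n(x)\le(n+1)t_n$, hence $\log d_n(x)=(1+\gamma)\log n+O(1)=\tfrac1\alpha\log n+O(1)$ and therefore $\lambda(x)=\alpha$; thus $\mathbb{E}(\{t_n\})\subseteq\{x:\lambda(x)=\alpha\}$. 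Computing the quantity $\eta$ of Lemma \ref{1} using $\log(n+1)!\sim n\log n$ and $\log(t_1\cdots t_n)\sim\gamma\,n\log n$ gives $\eta=1/\gamma=\alpha/(1-\alpha)$, so $\dim_{\rm H}\mathbb{E}(\{t_n\})=\frac{1}{1+\eta}=1-\alpha$, which is the desired lower bound.

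For the upper bound the key inclusion is that for any $\beta<\alpha$,
\[
\{x:\lambda(x)\ge\alpha\}\subseteq\limsup_{n\to\infty}A_n^{(\beta)},\qquad A_n^{(\beta)}:=\{x:d_n(x)\le n^{1/\beta}\},
\]
since $\limsup_n\frac{\log n}{\log d_n}\ge\alpha>\beta$ forces $d_n\le n^{1/\beta}$ for infinitely many $n$. As the digits are non-decreasing, $A_n^{(\beta)}$ is the union of the order-$n$ cylinders over $\mathcal C_n:=\{2\le\sigma_1\le\cdots\le\sigma_n\le n^{1/\beta}\}$, and covering the limsup set by these cylinders for $n\ge N$ yields, for every $s$,
\[
\mathcal H^{s}\big(\{x:\lambda(x)\ge\alpha\}\big)\le\liminf_{N\to\infty}\sum_{n\ge N}\Sigma_n(s),\qquad \Sigma_n(s):=\sum_{\mathcal C_n}\big|I_n(\sigma_1,\dots,\sigma_n)\big|^{s}.
\]
By Proposition \ref{cylinder}, $|I_n|\le(\sigma_1\cdots\sigma_n)^{-1}\le2^{-n}$, so the covering diameters tend to $0$ and the estimate is legitimate; everything reduces to showing $\sum_n\Sigma_n(s)<\infty$ in the right range of $s$. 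When $1<\alpha\le\infty$ I would take $\beta\in(1,\alpha)$, so $n^{1/\beta}=o(n)$ and $\#\mathcal C_n=\binom{\lfloor n^{1/\beta}\rfloor+n-2}{n}\le(2n)^{\lfloor n^{1/\beta}\rfloor}=e^{o(n)}$; bounding each term by the largest one $2^{-ns}$ gives $\Sigma_n(s)\le e^{o(n)}2^{-ns}$, summable for every $s>0$, so the dimension is $0$.

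The delicate case is $0<\alpha\le1$, where I want $\dim_{\rm H}\le1-\alpha$, and this summability estimate is the \emph{main obstacle}. Given $s>1-\alpha$, choose $\beta\in(1-s,\alpha)$, so that $s>1-\beta$ and $s<1$; with $M=\lfloor n^{1/\beta}\rfloor$ the multiset generating function gives $\Sigma_n(s)\le\sum_{\mathcal C_n}(\sigma_1\cdots\sigma_n)^{-s}=[x^n]\prod_{\sigma=2}^{M}(1-x\sigma^{-s})^{-1}\le x_0^{-n}\prod_{\sigma=2}^{M}(1-x_0\sigma^{-s})^{-1}$ for any $0<x_0<2^{s}$. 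Naive covers fail here: the bound $\#\mathcal C_n\times 2^{-ns}$ diverges because $n^{1/\beta}\gg n$, and any fixed $x_0$ bounded away from the radius of convergence $2^{s}$ also diverges. The point is that $x_0$ must be pushed toward $2^{s}$ at the precise rate $2^{s}-x_0\sim 2^{s}/n$: taking $x_0=2^{s}\frac{n}{n+1}$ makes the $\sigma=2$ factor equal $n+1$, makes $x_0^{-n}\le e\,2^{-ns}$, and makes the remaining factors contribute $\exp\big(O(\sum_{3\le\sigma\le M}\sigma^{-s})\big)=\exp(O(n^{(1-s)/\beta}))$. Since $(1-s)/\beta<1$, the exponential decay $2^{-ns}$ dominates the subexponential growth, so $\Sigma_n(s)$ is summable and $\mathcal H^{s}(\{x:\lambda(x)\ge\alpha\})=0$; as $s>1-\alpha$ was arbitrary, $\dim_{\rm H}\le1-\alpha$. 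Getting this balance right — equivalently, controlling $\sum_{\mathcal C_n}(\sigma_1\cdots\sigma_n)^{-s}$ against the pole from the smallest admissible digit — is the crux; once it is in hand, combining with the lower bound from Lemma \ref{1} yields equality throughout.
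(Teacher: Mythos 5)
Your proposal is correct, and for the crucial upper bound it takes a genuinely different route from the paper. The paper first passes to $D(x)=\liminf_{n\to\infty}\frac{\log d_n(x)}{\log n}$ and proves the upper bound through a two-part lemma: the hard part bounds $\dim_{\rm H}\{p\le D\le q\}$ by $(q-1)/p$ using covers by cylinders that impose \emph{both} $d_k\le k^{q+\varepsilon}$ at some time $k$ \emph{and} $d_j\ge j^{p-\varepsilon}$ at all intermediate times $N\le j\le k$ --- the lower bound on the intermediate digits is what shrinks the cylinder diameters to $(k!/(N-1)!)^{-(p-\varepsilon)}$ and makes the crude estimate (cardinality of the cover)$\times$(diameter)$^s$ work --- and then recovers $\dim_{\rm H}\{1<D\le\alpha\}\le(\alpha-1)/\alpha$ by slicing into the strips $E(k,j)$ and letting $k\to\infty$. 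You dispense with the $\liminf$ reformulation, the intermediate-digit constraints, and the strip decomposition all at once by weighting the cover: rather than counting $\mathcal C_n$ and using the worst-case diameter $2^{-n}$ (which, as you correctly note, diverges when the digit ceiling $n^{1/\beta}$ exceeds $n$), you evaluate $\sum_{\mathcal C_n}(\sigma_1\cdots\sigma_n)^{-s}$ exactly as a coefficient of $\prod_{\sigma=2}^{M}(1-x\sigma^{-s})^{-1}$ and apply the Chernoff-type bound at $x_0=2^{s}\frac{n}{n+1}$, where the pole from $\sigma=2$ contributes only a factor $n+1$ and the remaining factors contribute $\exp(O(n^{(1-s)/\beta}))$, subexponential precisely when $s>1-\beta$. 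This is a shorter, self-contained argument that lands directly on the exponent $1-\alpha$; what it gives up is the finer intermediate estimate $\dim_{\rm H}\{p\le D\le q\}\le(q-1)/p$ that the paper's method produces along the way. Your lower bound (Lemma \ref{1} with $t_n\asymp n^{(1-\alpha)/\alpha}$) is exactly the alternative the paper itself points out after \eqref{SWTheorem3.2}, whose primary justification is instead a citation of \cite[Theorem 3.2]{SW}. The easy regime $1<\alpha\le\infty$ of your argument coincides in substance with part (i) of the paper's lemma.
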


With the conventions $\frac{1}{\infty}=0$ and $\frac{\infty}{\infty}=1$, Theorem \ref{lambda} is equivalent to the following theorem.
Let $D(x) =  \liminf_{n\to\infty}\frac{\log d_n(x)}{\log n}$.

\begin{theorem}\label{Lambda}
For any $0\leq \alpha \leq \infty$,
\begin{equation*}
\dim_{\rm H} \big\{x \in (0,1): D(x) =\alpha\big\}=\dim_{\rm H} \big\{x \in (0,1): D(x) \leq\alpha\big\}=
\left\{
  \begin{array}{ll} \vspace{0.1cm}
    0, & \hbox{$0\leq \alpha<1$;} \\
    \dfrac{\alpha -1}{\alpha}, & \hbox{$1\leq\alpha \leq \infty$.}
  \end{array}
\right.
\end{equation*}
\end{theorem}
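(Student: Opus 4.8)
The plan is to establish two matching bounds and squeeze. Since $\{x:D(x)=\alpha\}\subseteq\{x:D(x)\le\alpha\}$, it suffices to prove the upper bound $\dim_{\rm H}\{x:D(x)\le\alpha\}\le s(\alpha)$ and the lower bound $\dim_{\rm H}\{x:D(x)=\alpha\}\ge s(\alpha)$, where $s(\alpha)=0$ for $0\le\alpha<1$ and $s(\alpha)=(\alpha-1)/\alpha$ for $1\le\alpha\le\infty$; the two inequalities then force all three quantities to coincide. Throughout I would work on the irrationals, since the rationals are countable and carry zero dimension (and for finite $\alpha$ they are excluded anyway, having finite expansions).

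For the lower bound I would invoke Lemma~\ref{1}. For $1<\alpha<\infty$, set $t_n=2n^{\alpha-1}$; this sequence is non-decreasing with $t_1\ge2$ and $t_n\to\infty$, and on $\mathbb{E}(\{t_n\})$ one has $2n^\alpha<d_n(x)\le4n^\alpha$, whence $\log d_n(x)/\log n\to\alpha$ and so $D(x)=\alpha$. A direct computation gives $\log(t_1\cdots t_n)\sim(\alpha-1)\log n!$ and $\log(n+1)!+\log t_{n+1}\sim\log n!$, so $\eta=1/(\alpha-1)$ and $\dim_{\rm H}\mathbb{E}(\{t_n\})=(\alpha-1)/\alpha$. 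For $\alpha=\infty$ I would instead take $t_n=2^n$, for which $\log(t_1\cdots t_n)\sim(\log2)n^2/2$ dominates $\log(n+1)!\sim n\log n$, giving $\eta=0$, $\dim_{\rm H}\mathbb{E}(\{t_n\})=1$, and $D(x)=\infty$ on $\mathbb{E}(\{t_n\})$. For $0\le\alpha\le1$ the lower bound $s(\alpha)=0$ is trivial.

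The real work is the upper bound. Fix $\beta>\alpha$. If $D(x)\le\alpha<\beta$ then $d_n(x)<n^\beta$ for infinitely many $n$, so for every $N$ we have the cover $\{x:D(x)\le\alpha\}\subseteq\bigcup_{n\ge N}\{x:d_n(x)<n^\beta\}$, and by Proposition~\ref{AD} each $\{d_n<n^\beta\}$ is the disjoint union of the cylinders $I_n(\sigma_1,\dots,\sigma_n)$ with $2\le\sigma_1\le\cdots\le\sigma_n<n^\beta$. Writing $M=\lceil n^\beta\rceil$ and using Proposition~\ref{cylinder} together with $\sigma_n-1\ge1$, the $s$-dimensional sum over the order-$n$ cylinders is at most
\[
T_n(s):=\sum_{2\le\sigma_1\le\cdots\le\sigma_n\le M}\frac{1}{(\sigma_1\cdots\sigma_n)^s}=h_n\big(2^{-s},3^{-s},\dots,M^{-s}\big),
\]
the complete homogeneous symmetric polynomial in the indicated variables. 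I would then apply the coefficient estimate $T_n(s)\le z^{-n}\prod_{j=2}^{M}(1-j^{-s}z)^{-1}$, valid for any $0<z<2^s$. Taking logarithms and using $-\log(1-j^{-s}z)\le C_1\,j^{-s}$ with $\sum_{2\le j\le M}j^{-s}\asymp M^{1-s}=n^{\beta(1-s)}$ when $s<1$, one gets $\log T_n(s)\le-n\log z+C_2\,n^{\beta(1-s)}$ for a fixed $z\in(1,2^s)$. When $s>1-1/\beta$ the exponent satisfies $\beta(1-s)<1$, so the linear term dominates, $T_n(s)\to0$ geometrically, and $\sum_n T_n(s)<\infty$; since the cylinder diameters are at most $2^{-n}$, letting $N\to\infty$ yields $\mathcal{H}^s(\{D\le\alpha\})=0$ and hence $\dim_{\rm H}\{D\le\alpha\}\le1-1/\beta$. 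Letting $\beta\downarrow\alpha$ gives $\le(\alpha-1)/\alpha$ when $\alpha>1$; when $\alpha<1$ one picks $\beta\in(\alpha,1)$, for which $\beta(1-s)<1$ holds for every $s>0$, forcing dimension $0$; the case $\alpha=\infty$ is trivial.

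The main obstacle is exactly the estimate for $T_n(s)$. Because the Engel digits are non-decreasing, the sum cannot be factorised as $(\sum_j j^{-s})^n$ (that relaxation is far too lossy, since $\sum_{j\le n^\beta}j^{-s}$ itself grows with $n$), and the upper cutoff $M=\lceil n^\beta\rceil$ grows with the order $n$, so the generating function cannot simply be summed over $n$ at a fixed number of variables. Recognising $T_n(s)$ as a complete homogeneous symmetric polynomial and applying the single-variable coefficient bound, then optimising the free parameter $z$, is what converts the monotonicity constraint into the sharp threshold $s=1-1/\beta$ and matches the lower bound obtained from Lemma~\ref{1}. I expect the only delicate points to be the uniform control of the constant $C_2$ as $z\downarrow1$ (handled by fixing one admissible $z$ for each $s$) and the bookkeeping at the boundary $\alpha=1$, where both regimes of the formula give dimension $0$.
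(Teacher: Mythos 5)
Your proposal is correct, and while the lower bound matches the route the paper itself indicates (a direct application of Lemma~\ref{1}; your choices $t_n=2n^{\alpha-1}$ for $1<\alpha<\infty$ and $t_n=2^n$ for $\alpha=\infty$ give exactly $\eta=1/(\alpha-1)$ and $\eta=0$, hence $\dim_{\rm H}\mathbb{E}(\{t_n\})=(\alpha-1)/\alpha$, with $D\equiv\alpha$ on $\mathbb{E}(\{t_n\})$ --- the paper instead quotes \eqref{SWTheorem3.2} but remarks that this alternative works), your upper bound is genuinely different from the paper's and is the more economical of the two. The paper first proves $\dim_{\rm H}\{p\le D\le q\}\le (q-1)/p$ by imposing a \emph{two-sided} digit constraint ($d_j\ge j^{p-\varepsilon}$ eventually, to force cylinder diameters down to $(k!/(N-1)!)^{-(p-\varepsilon)}$, together with $d_k\le k^{q+\varepsilon}$ infinitely often, to bound the \emph{number} of cylinders by $N_k(\lfloor k^{q+\varepsilon}\rfloor)\lesssim 2^{k}e^{(q+\varepsilon)k}(k!)^{q+\varepsilon-1}$ via Stirling); since this bound is only sharp when $p=q$, the paper must then slice $\{1<D\le\alpha\}$ into $k$ thin level bands $E(k,j)$ and let $k\to\infty$ to recover $(\alpha-1)/\alpha$. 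You instead keep only the one-sided constraint $d_n<n^\beta$ infinitely often, but replace the crude count-times-uniform-diameter estimate by the weighted sum $\sum|I_n|^s\le h_n(2^{-s},\dots,M^{-s})$ and bound this complete homogeneous symmetric polynomial by the coefficient inequality $h_n\le z^{-n}\prod_{j=2}^{M}(1-j^{-s}z)^{-1}$; since $\sum_{j\le M}j^{-s}\asymp n^{\beta(1-s)}=o(n)$ precisely when $s>1-1/\beta$, the sharp threshold falls out in one pass with no slicing and no lower digit bound. The trade-off is that the paper's argument uses only elementary counting (at the cost of an extra approximation layer), whereas yours requires the generating-function observation but handles all $\alpha$, including the boundary case $\alpha=1$ and the degenerate range $\alpha<1$, uniformly; both arguments are complete and yield the stated formula.
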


Remark that Shang and Wu \cite[Theorem 3.2]{SW} proved
\begin{equation}\label{SWTheorem3.2}
\dim_{\rm H}\left\{x\in(0,1): \lim\limits_{n\to\infty}\frac{\log d_n(x)}{\log n}=\alpha\right\}=
\left\{
  \begin{array}{ll} \vspace{0.1cm}
    0, & \hbox{$0\leq \alpha<1$;} \\
    \dfrac{\alpha -1}{\alpha}, & \hbox{$1\leq\alpha \leq \infty$.}
  \end{array}
\right.
\end{equation}
This gives the lower bound for the Hausdorff dimension of $\{x \in (0,1): D(x) =\alpha\}$. In fact, the lower bound can also be obtained by choosing a suitable sequence $\{t_n\}_{n \geq 1}$ in Lemma \ref{2}.
So it remains to calculate the upper bound for the Hausdorff dimension of $\{x \in (0,1): D(x) \leq\alpha\}$. To this end, the following lemma is needed.

\begin{lemma}
(i) For $0 \leq \beta<1$,
\begin{equation}\label{beta}
\dim_{\rm H}\big\{x \in (0,1): D(x) \leq\beta\big\} = 0.
\end{equation}
(ii) For $1 \leq p\leq q<\infty$,
\begin{equation}\label{pq}
\dim_{\rm H}\big\{x \in (0,1): p \leq D(x) \leq q\big\} \leq \frac{q-1}{p}.
\end{equation}
\end{lemma}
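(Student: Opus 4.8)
The plan is to prove both parts by a single covering scheme that exploits the fact that $D(x)\le\gamma$ forces $d_n(x)\le n^{\gamma+\epsilon}$ for infinitely many $n$, so that the sets in question are $\limsup$ sets naturally covered by cylinders. Fix $\epsilon>0$. Since the digit sequence is non-decreasing (Proposition \ref{AD}), at any index $n$ with $d_n(x)\le n^{\gamma+\epsilon}$ one automatically has $d_j(x)\le n^{\gamma+\epsilon}$ for every $j\le n$. I would therefore write the target set in the form $\bigcap_{M}\bigcup_{n\ge M}A_n$, where $A_n$ is the union of the order-$n$ cylinders $I_n(\sigma_1,\dots,\sigma_n)$ whose admissible string $\sigma$ lies in a prescribed box, and then bound the Hausdorff measure by the tail sum $\sum_{n\ge M}\sum_{\sigma}|I_n(\sigma)|^s$, reading off $|I_n(\sigma)|$ from Proposition \ref{cylinder}. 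Since every $\sigma_j\ge 2$, the cylinder diameters tend to $0$, so these tail sums genuinely control the $s$-dimensional Hausdorff measure.

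For part (i), I would take the box to be $2\le\sigma_1\le\cdots\le\sigma_n\le n^{\beta+\epsilon}$ with $\beta+\epsilon<1$. Two facts drive the estimate: the number of admissible (that is, non-decreasing) strings in this box is at most $\binom{n^{\beta+\epsilon}+n}{n}\le (2n)^{n^{\beta+\epsilon}}$, which is subexponential in $n$ because $n^{\beta+\epsilon}\ll n$; while each cylinder has length at most $2^{-n}$. Hence $\sum_{\sigma}|I_n(\sigma)|^s\le \binom{n^{\beta+\epsilon}+n}{n}2^{-ns}$ decays exponentially for every $s>0$, the tail sums are finite and tend to $0$, and the $s$-dimensional Hausdorff measure vanishes for all $s>0$. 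This yields dimension $0$.

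For part (ii) I would additionally use the lower constraint coming from $D(x)\ge p$: each $x$ in the set lies in $\{d_n\ge n^{p-\epsilon},\ \forall n\ge N\}$ for some $N$, so it suffices to bound the dimension of that part and then take a countable union over $N$. The box is now $\sigma_j\ge\max\{2,\lceil j^{p-\epsilon}\rceil\}$ together with $\sigma_n\le n^{q+\epsilon}$. The count of non-decreasing strings is $\le\binom{n^{q+\epsilon}+n}{n}\le (2n^{q+\epsilon})^n/n!$, while the lower bounds give $\sigma_1\cdots\sigma_n\ge c\,(n!)^{p-\epsilon}$, whence by Proposition \ref{cylinder} each such cylinder has length $\le C\,(n!)^{-(p-\epsilon)}n^{-(p-\epsilon)}$. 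Bounding $\sum_\sigma|I_n(\sigma)|^s$ by the count times the maximal length to the power $s$ and taking logarithms, Stirling's formula makes the dominant term $n\log n\,\big[(q+\epsilon)-1-(p-\epsilon)s\big]$, which is negative—so the series converges—precisely when $s>\tfrac{q+\epsilon-1}{p-\epsilon}$. Letting $s$ decrease to this threshold and then $\epsilon\to 0$ gives the bound $\tfrac{q-1}{p}$.

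The crucial point, and the main technical obstacle, is that the monotonicity of the digits must enter the \emph{counting} step: it is exactly the factorial $n!$ in the binomial coefficient that supplies the extra $-n\log n$, converting the naive exponent $q$ into $q-1$ and producing the sharp value $\tfrac{q-1}{p}$ rather than $\tfrac{q}{p}$. I would take care to check that the minimizing string $\sigma_j=\lceil j^{p-\epsilon}\rceil$ is itself admissible (non-decreasing and $\le n^{q+\epsilon}$, using $p\le q$), that the finitely many indices $j<N$ where only $\sigma_j\ge 2$ is available contribute merely a bounded multiplicative factor to the product, and that the subdominant $n^{-(p-\epsilon)s}$ factor from the term $(\sigma_n-1)^{-s}$ is of order $\log n$ and hence does not affect the threshold. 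The remaining bookkeeping, namely the limits $\epsilon\to 0$ and the union over $N$, does not raise the dimension, since a countable union never increases Hausdorff dimension.
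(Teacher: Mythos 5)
Your proposal is correct and follows essentially the same route as the paper: cover the limsup set by cylinders of order $n$ constrained to the box $\{\sigma_n\le n^{\gamma+\varepsilon},\ \sigma_j\ge j^{p-\varepsilon}\}$, count the non-decreasing strings via the binomial coefficient (the paper's $N_n(M)=\binom{n+M-2}{n}$), bound the cylinder lengths by $2^{-n}$ resp.\ $C_N\,(n!)^{-(p-\varepsilon)}$ from Proposition \ref{cylinder}, and let Stirling's formula produce the threshold $s=\frac{q+\varepsilon-1}{p-\varepsilon}$ before sending $\varepsilon\to 0$ and taking the countable union over $N$. Your observation that the $n!$ in the denominator of the string count is what converts the naive exponent $q$ into $q-1$ is exactly the mechanism driving the paper's estimate.
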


\begin{proof}
Recall a combinatorial result (see  \cite[Lemma 3.5]{SW}):
for two positive integers $M \geq 2$ and $n \geq 1$,
\begin{align}\label{CL}
N_n(M) := \#\big\{(d_1, \cdots,d_n) \in \mathbb{N}^n: 2 \leq d_1 \leq \cdots \leq d_n \leq M\big\}=\frac{(n+M-2)!}{n!\cdot(M-2)!}.
\end{align}

(i) Let $0 \leq \beta<1$. For any $0<\varepsilon<1-\beta$, if $D(x) \leq\beta$ for some $x\in (0,1)$, then $d_n(x) \leq n^{\beta +\varepsilon}$ holds for infinitely many $n \in \mathbb{N}$. Namely
\[
\big\{x \in (0,1): D(x) \leq\beta\big\} \subseteq \bigcap^\infty_{N=N_0}\bigcup^\infty_{n =N} \big\{x\in (0,1): d_n(x) \leq n^{\beta +\varepsilon} \big\},
\]
where $N_0:= \lceil 2^{1/(\beta+\varepsilon)} \rceil$. Note that
\[
\big\{x\in (0,1): d_n(x) \leq n^{\beta +\varepsilon} \big\} = \bigcup_{(\sigma_1, \cdots, \sigma_n)\in \mathcal{D}_n}I_n(\sigma_1, \cdots, \sigma_n),
\]
where $\mathcal{D}_n$ is given by $\mathcal{D}_n=\left\{(\sigma_1, \cdots, \sigma_n)\in\mathbb{N}^{n}: 2\leq \sigma_1\leq \cdots\leq \sigma_n\leq n^{\beta+\varepsilon}\right\}$. Hence
\[
\big\{x \in (0,1): D(x) \leq\beta\big\} \subseteq \bigcap^\infty_{N=N_0}\bigcup^\infty_{n =N}\bigcup_{(\sigma_1, \cdots, \sigma_n)\in \mathcal{D}_n}I_n(\sigma_1, \cdots, \sigma_n),
\]
which implies that for fixed $N$, the family $$\big\{I_n(\sigma_1, \cdots, \sigma_n): n \geq N, (\sigma_1, \cdots, \sigma_n)\in \mathcal{D}_n\big\}$$ is a cover of $\{x \in (0,1): D(x) \leq\beta\}$. Since $0<\varepsilon<1-\beta$, it follows from (\ref{CL}) that
\begin{align*}
\# \mathcal{D}_n &=  N_{n}\left(\lfloor n^{\beta+\varepsilon}\rfloor\right)\\
&\leq (n+1)\cdot(n+2)\cdots(n+\lfloor n^{\beta+\varepsilon}\rfloor-2)\\
&\leq \left(n+ n^{\beta+\varepsilon} \right)^{n^{\beta+\varepsilon}}\\
& \leq 2^{(1+\log_2 n)n^{\beta+\varepsilon}}.
\end{align*}
From Proposition \ref{cylinder}, we get $|I_n(\sigma_1, \cdots, \sigma_n)| \leq 2^{-n}$ for all $(\sigma_1, \cdots, \sigma_n)\in \mathcal{D}_n$.
Let $\mathcal{H}^{s}$ denote the $s$-dimensional Hausdorff measure.
Then
\begin{align*}
\mathcal{H}^{\varepsilon}(\{x \in (0,1): D(x) \leq\beta\})&\leq \liminf_{N\to\infty}\sum\limits_{n=N}^{\infty}\sum\limits_{(\sigma_1,\cdots,\sigma_n)\in\mathcal{D}_n}|I_n(\sigma_1,\cdots,\sigma_n)|^{\varepsilon}\\
 &\leq \liminf_{N\to\infty}\sum\limits_{n=N}^{\infty} \#\mathcal{D}_n  \cdot 2^{-\varepsilon n}\\
&\leq  \liminf_{N\to\infty}\sum\limits_{n=N}^{\infty} 2^{-\varepsilon n +(1+\log_2 n)n^{\beta+\varepsilon}} \\
&= 0,
\end{align*}
which gives $\dim_{\rm H} \{x \in (0,1): D(x) \leq\beta\}\leq \varepsilon$. Letting $\varepsilon \to 0^+$, we obtain the desired result.

(ii) Let $1 \leq p\leq q<\infty$. For any $0<\varepsilon<p$, if $p \leq D(x) \leq q$ for some $x\in (0,1)$, then $d_n(x) \leq n^{q +\varepsilon}$ holds for infinitely many $n \in \mathbb{N}$ and $d_m(x) \geq m^{p -\varepsilon}$ holds for sufficiently large $m$. Hence there exists $N \geq 1$ such that for any $n \geq N$, there exists $k \geq n$ such that $d_k(x) \leq k^{q +\varepsilon}$ and $d_j(x) \geq j^{p-\varepsilon}$ for all $N \leq j \leq k$. That is to say,
\[
\big\{x \in (0,1): p \leq D(x) \leq q\big\}\subseteq \bigcup_{N=N_0}B_{N}(\varepsilon),
\]
where $B_{N}(\varepsilon)$ is defined as
\begin{equation}\label{bhgx}
B_{N}(\varepsilon)=\bigcap_{n=N}\bigcup_{k=n}\big\{x\in(0,1): d_k(x)\leq k^{q+\varepsilon},\ d_{j}(x)\geq j^{p-\varepsilon},\ \forall\,N\leq j\leq k\big\}.
\end{equation}
By the monotonicity and countable stability of Hausdorff dimension (see \cite[p. 32]{Fal90}), we obtain
\begin{equation}\label{wsgx1}
\dim_{\rm H}\big\{x \in (0,1): p \leq D(x) \leq q\big\}\leq\sup_{N\geq N_0}\big\{\dim_{\rm H}B_{N}(\varepsilon)\big\}.
\end{equation}
From now on, let $N\geq N_0$ be fixed. We will deal with the Hausdorff dimension of $B_{N}(\varepsilon)$.
In view of (\ref{bhgx}), for any $n \geq N$, we see that
\begin{equation}\label{3}
 B_{N}(\varepsilon) \subseteq \bigcup_{k=n}\bigcup_{(\sigma_1, \cdots, \sigma_k) \in \widehat{\mathcal{D}}_k}I_k(\sigma_1, \cdots, \sigma_k),
\end{equation}
where $\widehat{\mathcal{D}}_k=\big\{(\sigma_1, \cdots, \sigma_k)\in\Sigma_{k}:\sigma_k \leq k^{q+\varepsilon}, \sigma_j\geq j^{p-\varepsilon}, \forall\,N\leq j\leq k\big\}$.
For any $(\sigma_1, \cdots, \sigma_k) \in \widehat{\mathcal{D}}_k$, it follows from Proposition \ref{cylinder} that
$$|I_k(\sigma_1, \cdots, \sigma_k)|<(N\cdots k)^{-(p-\varepsilon)}=\left(\frac{k!}{(N-1)!}\right)^{-(p-\varepsilon)}.$$
Since $q +\varepsilon>1$, by (\ref{CL}), we have
\begin{align*}
\# \widehat{\mathcal{D}}_k\leq N_{k}\left(\lfloor k^{q+\varepsilon}\rfloor\right)&= \frac{\left(\lfloor k^{q+\varepsilon}\rfloor -1\right)\cdot \lfloor k^{q+\varepsilon}\rfloor \cdots\left(\lfloor k^{q+\varepsilon}\rfloor+k-2\right)}{k!}\nonumber\\
&\leq \frac{k^{k(q+\varepsilon)}}{k!}\cdot\left(1+\frac{1}{k^{q+\varepsilon}}\right)\cdots\left(1+\frac{k-1}{k^{q+\varepsilon}}\right)\\
& \leq \frac{2^{k-1}\cdot k^{k(q+\varepsilon)}}{k!}.
\end{align*}
Recall that the Stirling formula: $\sqrt{2\pi}n^{n+\frac{1}{2}}e^{-n}\leq n!\leq en^{n+\frac{1}{2}}e^{-n}$ for all $n \geq 1$.
Then
\[
\# \widehat{\mathcal{D}}_k< 2^{k-1} \cdot e^{(q+\varepsilon)k} \cdot (k!)^{q+\varepsilon-1}.
\]
From \eqref{3}, we see that for any $n \geq N$, the family $$\big\{I_k(\sigma_1, \cdots, \sigma_k): k \geq n, (\sigma_1, \cdots, \sigma_k) \in \widehat{\mathcal{D}}_k\big\}$$ is a cover of $B_{N}(\varepsilon)$.
Let $s=(q+2\varepsilon-1)/(p-\varepsilon)$. We have
\begin{align*}
\mathcal{H}^{s}( B_{N}(\varepsilon))&\leq \liminf_{n\to\infty}\sum\limits_{k=n}^{\infty}\sum\limits_{(\sigma_1,\cdots,\sigma_k)\in\widehat{\mathcal{D}}_k}|I_k(\sigma_1,\sigma_2,\cdots,\sigma_k)|^{s}\\
&\leq \liminf_{n\to\infty}\big((N-1)!\big)^{(p-\varepsilon)s}\cdot \sum\limits_{k=n}^{\infty} (k!)^{-(p-\varepsilon)s} \cdot 2^{k-1} \cdot e^{(q+\varepsilon)k} \cdot (k!)^{q+\varepsilon-1} \\
&= \big((N-1)!\big)^{(p-\varepsilon)s}\cdot \liminf_{n\to\infty}\sum\limits_{k=n}^{\infty} 2^{k-1} \cdot \frac{e^{(q+\varepsilon)k}}{(k!)^{\varepsilon}}\\
& =0.
\end{align*}
Then $\dim_{\rm H}  B_{N}(\varepsilon) \leq s$. By \eqref{wsgx1},
\[
\dim_{\rm H}\big\{x \in (0,1): p \leq D(x) \leq q\big\}\leq\sup_{N\geq N_0}\big\{\dim_{\rm H}B_{N}(\varepsilon)\big\} \leq \frac{q+2\varepsilon-1}{p-\varepsilon}.
\]
Letting $\varepsilon \to 0^+$ yields $\dim_{\rm H}\{x \in (0,1): p \leq D(x) \leq q\} \leq (q-1)/p$.
\end{proof}

Now we are ready to give the proof of Theorem \ref{Lambda}.

\begin{proof}[Proof of Theorem \ref{Lambda}]
We only need to prove
\[
\dim_{\rm H} \big\{x \in (0,1): D(x) \leq\alpha\big\}\leq
\left\{
  \begin{array}{ll} \vspace{0.1cm}
    0, & \hbox{$0\leq \alpha\leq 1$;} \\
    \dfrac{\alpha -1}{\alpha}, & \hbox{$1<\alpha \leq \infty$.}
  \end{array}
\right.
\]
For the case $\alpha =\infty$, the proof is trivial.
For $0 \leq \alpha\leq1$, it suffices to show
\[
\dim_{\rm H}\big\{x \in (0,1): D(x) \leq1\big\} = 0.
\]
In fact, since
\[
\big\{x \in (0,1): D(x)<1\big\} = \bigcup^\infty_{k=1} \left\{x \in (0,1): D(x)\leq 1 - \frac{1}{k}\right\},
\]
by (\ref{beta}), we see that
\[
\dim_{\rm H}\big\{x \in (0,1): D(x)<1\big\} = \sup_{k \geq 1}\left\{\dim_{\rm H}  \left\{x \in (0,1): D(x)\leq 1 - \frac{1}{k}\right\}\right\}=0.
\]
It follows from \eqref{pq} that $\dim_{\rm H}\{x \in (0,1): D(x) =1\} =0$. Hence $\dim_{\rm H}\{x \in (0,1): D(x) \leq1\} =0$.

For $1<\alpha<\infty$, we have
\[
\big\{x \in (0,1): D(x)\leq \alpha\big\} =  \big\{x \in (0,1): D(x)\leq 1\big\} \bigcup \big\{x \in (0,1): 1<D(x)\leq \alpha\big\}.
\]
Since the first set on the right-hand side is of Hausdorff dimension zero, it remains to estimate the upper bound for the Hausdorff dimension of $\big\{x \in (0,1): 1<D(x)\leq \alpha\big\}$.
To this end, for $k \geq 1$ and $1 \leq j \leq k$, let
\[
E(k,j):=\left\{x \in (0,1): 1+ \frac{j-1}{k}(\alpha -1)<D(x)\leq 1+  \frac{j}{k}(\alpha -1)\right\}.
\]
Then it follows from \eqref{pq} that
\begin{equation}\label{kj}
\dim_{\rm H}E(k,j) \leq  \frac{\frac{\alpha -1}{k}j}{1+ \frac{\alpha -1}{k}(j-1)}.
\end{equation}
For $0<\gamma <1$, we deduce that the map
\[
x \mapsto\frac{\gamma x}{1+\gamma(x-1)}
\]
is increasing in $[1,\infty)$. Hence for $k >\alpha -1$, i.e. $\frac{\alpha-1}{k}<1$,
\[
\max_{1\leq j \leq k}\left\{\frac{\frac{\alpha -1}{k}j}{1+ \frac{\alpha -1}{k}(j-1)}\right\} = \frac{\alpha -1}{1+\frac{k-1}{k}(\alpha -1)}.
\]
Note that for all $k \geq 1$,
\[
\big\{x \in (0,1): 1<D(x)\leq \alpha\big\} = \bigcup^k_{j=1} E(k,j).
\]
Combining these with \eqref{kj}, we finally obtain
\[
\dim_{\rm H} \big\{x \in (0,1): 1<D(x)\leq \alpha\big\} = \max_{1\leq j \leq k}\Big\{\dim_{\rm H} E(k,j)\Big\} \leq \frac{\alpha -1}{1+\frac{k-1}{k}(\alpha -1)}
\]
for any $k >\alpha -1$. Letting $k \to \infty$ yields $\dim_{\rm H} \big\{x \in (0,1): 1<D(x)\leq \alpha\big\} \leq (\alpha -1)/\alpha$. Then the proof is completed.

\end{proof}

We end this paper with the Hausdorff dimension of the set
\[
\Lambda_\phi:=\left\{x\in (0,1): \liminf\limits_{n\to\infty}\frac{\log d_n(x)}{\phi(n)}=1\right\},
\]
where $\phi:\mathbb{N} \rightarrow\mathbb{R}^{+}$ is a function such that $\phi(n) \to \infty$ as $n \to \infty$.
Assume that the limit $\vartheta:=\lim_{n\to\infty} \phi(n)/\log n$ exists.
Theorem \ref{Lambda} implies that if $\vartheta=0$, then the Hausdorff dimension of $\Lambda_\phi$ is zero; if $0<\vartheta<\infty$, then the Hausdorff dimension of $\Lambda_\phi$ is given by the formula in \eqref{SWTheorem3.2}.
For the remaining case, i.e., $\vartheta=\infty$, we have the following theorem which gives a full description of the Hausdorff dimension of $\Lambda_\phi$ for super-logarithmic functions $\phi$.

\begin{theorem}\label{ybphi}
Let $\phi:\mathbb{N} \rightarrow\mathbb{R}^{+}$ be a non-decreasing function and $\phi(n)/\log n \to\infty$ as $n \to \infty$.
Then
\[
\dim_{\rm H}\Lambda_\phi=\frac{1}{A},
 \]
 where $A$ is defined as
 \[
\log A:=\limsup\limits_{n\to\infty}\frac{\log\phi(n)}{n}.
 \]
\end{theorem}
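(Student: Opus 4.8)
The plan is to prove the two inequalities separately, reducing everything to the growth rate of the digits. Throughout write $a:=\log A=\limsup_{n\to\infty}\frac{\log\phi(n)}{n}$; since $\phi(n)/\log n\to\infty$ forces $\phi(n)\to\infty$ we have $a\ge 0$, i.e. $A\in[1,\infty]$ and $1/A\in[0,1]$. The key observation is that $x\in\Lambda_\phi$ is \emph{exactly} the requirement that, for every $\varepsilon>0$, one has $\log d_n(x)\ge(1-\varepsilon)\phi(n)$ for all large $n$ \emph{and} $\log d_n(x)\le(1+\varepsilon)\phi(n)$ for infinitely many $n$. The upper bound will use only the first half and Lemma \ref{2}; the lower bound will use Lemma \ref{1}.

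For the upper bound, fix $\varepsilon>0$. The first condition gives $\Lambda_\phi\subseteq\bigcup_{N\ge1}\{x: d_n(x)\ge e^{(1-\varepsilon)\phi(n)}\ \forall n\ge N\}$. Applying Lemma \ref{2} with $\varphi(n)=e^{(1-\varepsilon)\phi(n)}$ and noting $\log\log\varphi(n)=\log\phi(n)+\log(1-\varepsilon)$, the doubly-logarithmic normalization absorbs the constant, so $\limsup_n\frac{\log\log\varphi(n)}{n}=\log A$ and each member of the union has dimension $1/A$. Countable stability of Hausdorff dimension then yields $\dim_{\rm H}\Lambda_\phi\le 1/A$ for all $A\in[1,\infty]$.

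The lower bound is the heart of the matter, and here the naive guess fails. One is tempted to pin $d_n\approx e^{\phi(n)}$, i.e. apply Lemma \ref{1} with $\log(nt_n)\approx\phi(n)$; but then the exponent $\eta$ of Lemma \ref{1} behaves like $\limsup_n\frac{\phi(n+1)}{\phi(1)+\cdots+\phi(n)}$, which can be infinite — for instance if $\phi$ is constant on long blocks and then jumps, a sharp peak sitting after a long valley makes this ratio blow up, wrongly forcing dimension $0$. The resolution is that $\Lambda_\phi$ only asks for $\log d_n\gtrsim\phi(n)$ \emph{always} and $\log d_n\approx\phi(n)$ \emph{infinitely often}, so in the valleys of $\phi$ we are free to let the digits be far larger than $e^{\phi(n)}$ without leaving $\Lambda_\phi$. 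Fixing $\varepsilon>0$, I would therefore choose a non-decreasing $\{t_n\}$ with $\log(nt_n)=U_n$, where
\[
U_n:=\sup_{m\ge n}\phi(m)\,(Ae^{\varepsilon})^{-(m-n)}
\]
is the smallest non-decreasing majorant of $\phi$ whose consecutive ratios satisfy $U_{n+1}/U_n\le Ae^{\varepsilon}$. This $U_n$ is finite because $\phi(m)\le(Ae^{\varepsilon/2})^m$ eventually, it satisfies $U_n\ge\phi(n)$, and the defining supremum is attained at contact points where $U_n=\phi(n)$, which occur infinitely often; hence $d_n\in(nt_n,(n+1)t_n]$ gives $\log d_n=U_n+o(1)$ with $\liminf_n\frac{\log d_n}{\phi(n)}=1$, so $\mathbb{E}(\{t_n\})\subseteq\Lambda_\phi$. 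The ratio bound $U_{n+1}/U_n\le Ae^{\varepsilon}$ forces $\sum_{k\le n}U_k\ge U_{n+1}/(Ae^{\varepsilon}-1)\,(1-o(1))$ by geometric comparison, whence $\eta\le Ae^{\varepsilon}-1$ and $\dim_{\rm H}\mathbb{E}(\{t_n\})=\tfrac{1}{1+\eta}\ge\tfrac{1}{Ae^{\varepsilon}}$. Letting $\varepsilon\to0^+$ yields $\dim_{\rm H}\Lambda_\phi\ge 1/A$, matching the upper bound.

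The main obstacle is precisely the construction of this geometric majorant and the verification of its three properties: finiteness, the infinitude of the contact points (the maximizers of $m\mapsto\phi(m)(Ae^{\varepsilon})^{-m}$ over $m\ge n$ tend to infinity and each is a contact point), and the passage from the ratio bound to $\eta\le Ae^{\varepsilon}-1$. A secondary technical point is the term $\log(n+1)!$ occurring in $\eta$, but it is harmless: since $\phi(k)/\log k\to\infty$ we have $\sum_{k\le n}U_k\ge\sum_{k\le n}\phi(k)\gg\log n!$, so this term is negligible against both numerator and denominator. The boundary cases require no extra work: $A=\infty$ gives $1/A=0$ so only the upper bound is needed, while $A=1$ is the same construction with $a=0$, producing subsets of $\Lambda_\phi$ of dimension $\ge e^{-\varepsilon}\to1$.
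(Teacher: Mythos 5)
Your proposal is correct and follows essentially the same route as the paper: the identical upper bound via Lemma \ref{2} and countable stability, and for the lower bound the same geometric majorant (your $U_n$ is exactly the paper's $\log T_n$ with $Ae^{\varepsilon}$ in place of $A+\varepsilon$), the same contact-point argument showing $\liminf U_n/\phi(n)=1$, and the same ratio bound feeding into $\eta$ in Lemma \ref{1}. The only cosmetic difference is your normalization $\log(nt_n)=U_n$ versus the paper's $t_n=KT_n$; either way the $\log n$ discrepancy is absorbed by $\phi(n)/\log n\to\infty$.
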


\begin{remark}
Under the same condition as Theorem \ref{ybphi}, Shang and Wu \cite[Theorem 3.1]{SW2} proved that
\[
\dim_{\rm H}\left\{x\in (0,1): \lim\limits_{n\to\infty}\frac{\log d_n(x)}{\phi(n)}=1\right\}=\frac{1}{1+\xi},
\]
where $\xi$ is given by
\[
\xi:= \limsup_{n \to \infty} \frac{\phi(n+1)}{\phi(1)+\cdots +\phi(n)}.
\]
Comparing this with the result of Theorem \ref{ybphi}, we point out that $1+\xi \geq A$ and the strict inequality can be obtained for some special function $\phi$. See Liu \cite{Liu} for the similar result.
\end{remark}

\begin{proof}[Proof of Theorem \ref{ybphi}]
We divide the proof into two parts: the upper bound and the lower bound of $\dim_{\rm H}\Lambda_\phi$.
The upper bound of $\dim_{\rm H}\Lambda_\phi$ is a consequence of Lemma \ref{2} and the lower bound of $\dim_{\rm H}\Lambda_\phi$ relies on Lemma \ref{1} and the arguments in \cite{FS, LM}.

{\bf Upper bound:} For any $0<\varepsilon<1$, we deduce that
\begin{equation*}
\Lambda_\phi \subseteq \bigcup_{N=1}^{\infty}\left\{x\in(0,1): d_n(x)\geq e^{(1-\varepsilon)\phi(n)},\ \forall\,n\geq N\right\}.
\end{equation*}
For $N \in \mathbb{N}$, it follows from Lemma \ref{2} that
\[
\dim_{\rm H}\left\{x\in(0,1): d_n(x)\geq e^{(1-\varepsilon)\phi(n)},\ \forall\,n\geq N\right\} =  \frac{1}{A},
\]
where $A$ is given by $\log A:=\limsup_{n\to\infty} (\log\phi(n))/n$. Then
\[
\dim_{\rm H} \Lambda_\phi \leq \sup_{N\geq 1}\left\{ \dim_{\rm H}\left\{x\in(0,1): d_n(x)\geq e^{(1-\varepsilon)\phi(n)},\ \forall\,n\geq N\right\}\right\}= 1/A.
\]

{\bf Lower bound:}
The number $A$ is given by the equation
\[
\log A=\limsup_{n\to\infty} \frac{\log\phi(n)}{n},
\]
so we have $1\leq A \leq \infty$.
For $A=\infty$, the proof is trivial.
In the following, we always assume that $1 \leq A<\infty$.
For any $\varepsilon>0$, we have $\phi(n) \leq (A+\varepsilon/2)^n$ for $n$ large enough. This implies that for fixed $j \in \mathbb{N}$,
\[
 \phi(n)(A+\varepsilon)^{j-n} \leq (A+\varepsilon/2)^n(A+\varepsilon)^{j-n} \to 0 \ \ \ \text{as} \ \ \ n \to \infty.
\]
Let
\begin{equation*}\label{tidy}
 T_j=\sup\limits_{n\geq j}\left\{e^{\phi(n)(A+\varepsilon)^{j-n}}\right\},\ \forall j=1,2,\cdots.
\end{equation*}
Then the supremum in the definition of $T_j$ is achieved. Since $\phi$ is a non-decreasing function, we have
 \begin{equation}\label{tigx}
 T_{j}\leq T_{j+1}\ \ \ \text{and}\ \ \ \ T_{j+1}\leq T^{A+\varepsilon}_{j}.
 \end{equation}
Here we claim that
\begin{equation}\label{tixjx}
 \liminf\limits_{n\to\infty}\frac{\log T_n}{\phi(n)}=1.
 \end{equation}
In fact, by the definition of $T_j$, we get $T_j \geq e^{\phi(j)}$ for all $j \geq 1$ and then
\[
\liminf\limits_{n\to\infty}\frac{\log T_n}{\phi(n)}\geq1.
\]
For the opposite inequality, denote by $t_j$ the smallest number $k \geq j$ for which $e^{\phi(k)(A+\varepsilon)^{j-k}}$ achieves the supremum in the definition of $T_j$. Namely
\[
t_j:=\inf\left\{k \geq j: e^{\phi(k)(A+\varepsilon)^{j-k}} =T_j\right\}.
\]
Then we obtain $t_j \geq j$ and $t_j \to \infty$ as $j \to \infty$. Next we will show that $t_j=t_{j+1}=\cdots=t_{t_j}$. If $t_j =j$, then the desired result follows.
For $t_j >j$, by the definition of $t_j$, we have
\begin{equation*}
e^{\phi(t_j)(A+\varepsilon)^{j-t_j}} > e^{\phi(\ell)(A+\varepsilon)^{j-\ell}},\ \ \ \forall \ell= j,\cdots,t_j-1
\end{equation*}
and
\begin{equation*}
e^{\phi(t_j)(A+\varepsilon)^{j-t_j}} \geq e^{\phi(m)(A+\varepsilon)^{j-m}},\ \ \ \forall m= t_j+1, t_j+2,\cdots,
\end{equation*}
which yields that
\[
e^{\phi(t_j)(A+\varepsilon)^{j+1-t_j}} > e^{\phi(\ell)(A+\varepsilon)^{j+1-\ell}},\ \ \ \forall \ell= j+1,\cdots,t_j-1
\]
and
\[
e^{\phi(t_j)(A+\varepsilon)^{j+1-t_j}} \geq e^{\phi(m)(A+\varepsilon)^{j+1-m}},\ \ \ \forall m= t_j+1, t_j+2,\cdots
\]
respectively. By the definitions of $T_{j+1}$ and $t_{j+1}$, we see that $t_{j+1} = t_j$. Repeating the above arguments leads to $t_{j+2}=t_{j}$, $t_{j+3}=t_{j}$, $\cdots$, $t_{t_j} = t_{j}$.
Therefore, we obtain $t_j=t_{j+1}=\cdots=t_{t_j}$. Note that $t_j \to \infty$ as $j \to \infty$, so we can choose an increasing subsequence $\{p_k\}$ from $\{t_j\}$.
Then $p_k = t_{p_k}$ and
\[
T_{p_k} = e^{\phi(t_{p_k})(A+\varepsilon)^{p_k-t_{p_k}}}= e^{\phi(p_k)},
\]
which gives
\[
\liminf\limits_{n\to\infty}\frac{\log T_n}{\phi(n)}\leq  \liminf\limits_{n\to\infty}\frac{\log T_{p_k} }{\phi(p_k)}  =1.
\]
Hence \eqref{tixjx} holds.

Since $\{T_j\}$ is non-decreasing and $T_j \to \infty$ as $j \to \infty$, there exists $K>0$ such that $KT_1 \geq 2$.
Write $t_n:=KT_n$ for all $n \geq 1$ and
\[
\mathbb{\widetilde{E}}=\big\{x\in [0,1): nt_n\leq d_n(x)<(n+1)t_n, \forall\ n\geq1\big\}.
\]
Then $\mathbb{\widetilde{E}} \subseteq \Lambda_\phi$.
Note that $\phi(n)/\log n \to\infty$ as $n \to \infty$, combining \eqref{tigx} with \eqref{tixjx}, we have
\[
\limsup\limits_{n\to\infty}\frac{\log(n+1)}{\log T_n} =0\ \ \ \ \text{and}\ \ \ \log T_{n+1}-\log T_1\leq\big(A+\varepsilon-1\big)\sum\limits_{k=1}^{n}\log T_k.
\]
It follows from Lemma \ref{1} that
\[
\dim_{\rm H}\Lambda_\phi \geq \dim_{\rm H}\mathbb{\widetilde{E}} =\frac{1}{1+\eta},
\]
where $\eta$ is given by
\begin{align*}
\eta&= \limsup\limits_{n\to\infty}\frac{\log(n+1)!+\log(KT_{n+1})}{\sum\limits_{k=1}^{n}\log(KT_k)}\\
&\leq \limsup\limits_{n\to\infty}\frac{\log(n+1)!}{\sum\limits_{k=1}^{n} \log T_k}+\limsup_{n\to\infty}\frac{\log T_{n+1}}{\sum\limits_{k=1}^{n}\log  T_k}\\
&\leq  \limsup\limits_{n\to\infty}\frac{\log(n+1)}{\log T_n} +(A+\varepsilon-1)\\
&=A+\varepsilon-1.
\end{align*}
Hence \[\dim_{\rm H}\Lambda_\phi \geq \frac{1}{A+\varepsilon}.\] Letting $\varepsilon \to 0^+$ yields the assertion.
\end{proof}

{\bf Acknowledgement:}
The authors are grateful to Professor Lingmin Liao for his invaluable comments and suggestions.
The research is supported by National Natural Science Foundation of China (No.\,11771153) and Guangdong Natural Science Foundation (No.\,2018B0303110005).
Shang Lei would like to thank China Scholarship Council (No.\,202006150158) for the financial support of her visit to Universit\'{e} Paris-Est Cr\'{e}teil.

\section*{References}

\end{document}